\newcommand{\sgn} {\mathrm{sgn}\,}
\newcommand{\mc}{\mathcal}
\newcommand{\cp}{\times}
\newcommand{\di}{\nabla\cdot}
\newcommand{\cu}{\nabla\times} 
\newcommand{\JI}[1]{\bol{#1}\cdot\cu{\bol{#1}}}
\newcommand{\Norm}[1]{\left\lvert\left\lvert #1 \right\rvert\right\rvert}
\newcommand{\bol}{\boldsymbol}
\newcommand{\hb}[1]{\hat{\bol{#1}}}
\newcommand{\abs}[1]{\left\lvert{#1}\right\rvert}
\newcommand{\w}{\wedge}
\newcommand{\lr}[1]{\left({#1}\right)}
\newcommand{\mf}{\mathfrak}
\newcommand{\p}{\partial}
\newcommand{\ov}[1]{\mkern 1.5mu\overline{\mkern-1.5mu#1\mkern-1.5mu}\mkern 1.5mu}
\newtheorem{mydef}{\textit{Def}}[section]
\newtheorem{remark}{\textit{Remark}}[section]
\newtheorem{theorem}{\textit{Theorem}}[section]
\newtheorem{corollary}{\textit{Corollary}}[section]
\newtheorem{proposition}{\textit{Proposition}}[section]
\newtheorem{example}{\textit{Example}}[section]
\newtheorem{lemma}{\textit{Lemma}}[section]
\let\cat@comma@active\@empty
\begin{document}
\title{Degenerate Laplacian: A Classification by Helicity}
\author{N. Sato}
\affiliation{Research Institute for Mathematical Sciences, Kyoto University, Kyoto 606-8502, Japan}
\author{Z. Yoshida}
\affiliation{Graduate School of Frontier Sciences, The University of Tokyo,
Kashiwa, Chiba 277-8561, Japan}
\date{\today}

\begin{abstract}
We study a class of generalized Laplacian operators by violating the ellipticity with degenerate metric tensors.
The theory is motivated by the statistical mechanics of topologically constrained particles.
In the context of diffusion models, the metric tensor is given by $g=-\mathcal{J}^2$ with a generalized Poisson matrix $\mathcal{J}$ that dictates particle dynamics.
The standard Euclidean metric corresponds to the symplectic matrix of canonical Hamiltonian systems.
However, topological constraints bring about nullity to $\mathcal{J}$,
resulting in degeneracy in the corresponding diffusion operator;
we call such an operator an orthogonal Laplacian 
(since the ellipticity is broken in the direction parallel to the nullity),
and denote it by $\Delta_{\perp}$.
Although all nice properties pertinent to the ellipticity are generally lost for $\Delta_{\perp}$,
a finite helicity of $\mathcal{J}$ helps to recover some of them  by preventing foliation of space.
We show that
$-\lr{\Delta_{\perp}u,v}$ defines an inner product of a Sobolev-like Hilbert space, and satisfies a Poincar\'e-like inequality $-\lr{\Delta_{\perp}u,u}\geq C\Norm{u}^{2}_{L^2}$.
Applying Riesz's representation theorem, we obtain a unique weak solution of the orthogonal Poisson equation.
\end{abstract}

\keywords{\normalsize }

\maketitle

\begin{normalsize}

\section{Introduction}

In order to reflect the effect of a non-trivial geometry of space, the 
classical Laplacian $\Delta = \sum_j \partial_j^2 $ ($\partial_j = 
\partial/\partial x^j$ with Cartesian coordinates $\bol{x}=\lr{x^1,\cdots,x^n}$) is 
generalized to an operator such as
\begin{equation}
\mathscr{L} = 
\partial_j g^{jk} \partial_k,\label{L}
\end{equation}
with some ``metric tensor'' $g^{jk}$. 
When $g^{jk}$ is a Riemanian metric 
(i.e., $g^{jk}=g^{kj}$ with all positive-definite eigenvalues),
$\mathscr{L}$ is ``elliptic'' being essentially equivalent to $\Delta$ (see Refs. \cite{Gilbarg,Evans,Brezis}).
Here we allow $g^{jk}$ to be ``positive semi-definite'' in the sence that
$g^{jk}q_{j}q_{k} \geq 0$ for any covector ${q}$.
Then, $\mathscr{L}$ becomes a \textit{degenerate} elliptic operator (see Ref. \cite{Kohn}).
Efforts have been devoted for isolating singularities (degenerate points) 
to show the existence and uniqueness of solution to the boundary value problem of degenerate Poisson equations (see Refs. \cite{Murthy,Chab,Trudinger}).
\footnote{
The key role is played by a non-negative ``control function'' $m\in L^{s}\lr{\Omega}$ such that $g^{jk}\xi_{j}\xi_{k}\geq m\lr{\bol{x}}\abs{\bol{\xi}}^2$ almost everywhere in the domain $\Omega\subset\mathbb{R}^{n}$ and for all $\bol{\xi}\in\mathbb{R}^n$ with $m^{-1}\in L^{t}$ and $s^{-1}+t^{-1}\leq 2/n$. 
So the relevant singularity pertains to the so-called \textit{strict ellipticity} (see \cite{Trudinger}).
} 
In Ref. \cite{Chanillo2}, existence of Green's functions has been studied.
However, the present work is aimed at a different class of $\mathscr{L}$
(in fact, more seriously degenerate operators)
that appear in the theory of diffusion in topologically constrained systems.

In the context of statistical mechanics, $\mathscr{L}$ is the generator of the semigroup representing diffusion processes.
Then, the metric tensor $g^{jk}$ in \eqref{L} is related to the antisymmetric operator $\mc{J}\in\bigwedge^2T\Omega$ 
($\Omega\subset\mathbb{R}^n$ is the phase-space domain)
that generates single particle dynamics by $\dot{\bol{x}}=\mc{J} {dH}$, 
where $H\in C^{\infty}\lr{{\Omega}}$ is the Hamiltonian. 
We have (see Refs. \cite{Sato1,Sato2}): 
\begin{equation}
g^{jk}=-\lr{\mc{J}^2}^{jk}.\label{gjk}
\end{equation} 
When particle motion obeys Hamilton's canonical equations, 
the antisymmetric operator $\mc{J}$ is nothing but the co-symplectic matrix $\mc{J}_{c}$,
and then, $g^{jk}=\delta^{jk}$ is the Euclidean metric
(accordingly, $\mathscr{L}$ is the standard Laplacian).
When some \textit{integrable} topological constraints apply,
particle motion takes the form of a noncanonical Hamiltonian system (see Ref. \cite{Morrison}), 
and the antisymmetric operator $\mc{J}$ defines a degenerate Poisson algebra. If $\bol{x}$ is the coordinate system spanning the invariant measure provided by Liouville's theorem, 
the metric tensor is related to the components $\mc{J}^{ij}$ of the Poisson matrix in such coordinate system (see Refs. \cite{Sato1,Sato2}).
The resulting metric is degenerate, but the nullity (kernel) of the tensor $g^{jk}$ is integrable in terms of Casimir invariants (the Lie-Darboux theorem; see Refs. \cite{DeLeon_3,Arnold_4}). 
The diffusion occurs on the Casimir leaves.

More general class of  topological constraints
may violate the Hamiltonian structure;
this is indeed the subject of our interest.
In order to maintain the energy conservation law, 
$\mc{J}$ must be an antisymmetric matrix, but may have nullity.
\textit{Non-integrable} topological constraints, moreover,
prevent $\mc{J}$ from satisfying the Jacobi identity (see Ref. \cite{Bloch}).
Then, the nullity of $\mc{J}$ does not foliate the phase space.
We call such $\mc{J}$ a \emph{generalized} Poisson matrix. 

In the present work, we consider a generalized Laplace operator (\ref{L})
with a degenerate metric tensor of type  \eqref{gjk},
which we call an \textit{orthogonal Laplacian operator},
and denote it by $\Delta_{\perp}$. 
When the constraint is integrable as a Casimir invariant (i.e., the kernel of $\mc{J}$ foliates the phase space),
$\Delta_{\perp}$ is effectively a Laplacian on the Casimir leaves. 
Then, the diffusion generated by $\Delta_{\perp}$ will flatten the distribution of particles on each Casimir leaf.
However, when the constraint is \emph{non-integrable}, the situation is very different;
we expect (and observe in numerical experiments, see Ref. \cite{Sato2}) that the diffusion generated by $\Delta_{\perp}$
homogenizes the distribution of particles,
or, non-integrable topological constraint cannot sustain inhomogeneity (see Ref. \cite{Sato2}).
To provide this conjecture with mathematical proof,
we show that $-\Delta_{\perp}$ with a non-integrable nullity retains some nice properties of standard elliptic operators, i.e.,
$-\lr{\Delta_{\perp}u,v}$ defines an inner product of a Sobolev-like Hilbert space, and satisfy a Poincar\'e-like inequality $-\lr{\Delta_{\perp}u,u}\geq C\Norm{u}^{2}_{L^2}$.
Applying Riesz's representation theorem, we obtain a unique weak solution of the orthogonal Poisson equation
(that give the stationary distribution of the orthogonal diffusion equation).

We describe the theory for a compact domain in 3-dimensional Euclidean  space $\mathbb{R}^3$
(which is the minimum-dimension space in which the non-integrability can occur;
generalization to compact manifolds of arbitrary dimensions will be mentioned in the concluding remarks).
The action of the antisymmetric matrix $\mc{J}$ can be represented by the cross product with a vector field $\bol{w}$, i.e. 
\begin{equation}
\mc{J} {d f} = \bol{w}\cp\nabla f ,
\end{equation}
and then, we may write
\begin{equation}
\Delta_{\perp}
=-\nabla\cdot\left[\hb{w}\cp\lr{\hb{w}\cp\nabla}\right],
\end{equation}
where $\hb{w}=\bol{w}/w$. 
We call $\hb{w}$ the dual (or constraining) vector field of the metric. 
In the direction of $\hb{w}$, the orthogonal Laplacian is degenerate.
Physically, it is the direction in which particles cannot move.
In appendix A, we give a short summary of how the operator $\Delta_{\perp}$ arises in the context of constrained diffusion processes.
The non-integrability (or the violation of the Jacobi identity) is detected by the \emph{helicity}
$h=\JI{w}$.
When $h$ is zero, the anti-symmetric bilinear bracket 
$\left\{f,g\right\}=\nabla f\cdot\bol{w}\cp\nabla g$
satisfies the Jacobi identity; 
hence it defines a Poisson manifold (the corresponding dynamics is Hamiltonian). 
The phase space $\Omega$ is foliated by the center (Casimir element) of the 
Poisson algebra, the type of foliation being determined by the Bianchi classification of three dimensional Lie-Poisson algebras (see Ref. \cite{Yoshida}). 
Examples of Hamiltonian systems affected by integrable constraints can be found in Refs. \cite{Yoshida1} and \cite{Morrison}. 
As shown in Ref. \cite{Sato2}, the standard construction of statistical mechanics is then applicable on each Casimir leaf, 
and the stationary solution to the diffusion equation (the boundary value problem for the orthogonal Laplacian) is obtained by means of an H-theorem. 
Notice, however, that such solution is not unique, the multiplicity being determined by the Casimir invariants. 
The same is true for Hamiltonian systems of an arbitrary finite dimension.

When $h\neq 0$ the Jacobi identity is violated and the system ceases to be Hamiltonian.
The phase space is no longer foliated,
because the degenerate direction $\hb{w}$ is not integrable in the sense of the Frobenius theorem (see Ref. \cite{Frankel}). 

The paper is organized as follows. In section II we give formal definitions
of the orthogonal Laplacian operator and the orthogonal Poisson equation.
In section III we introduce a bilinear form 
given by the $L^2$ product of orthogonal components of gradients, 
and show that it satisfies the axioms of inner product on $C_{0}^{1}$ provided that the
constraining vector field has finite helicity.
Then, the bilinear form is combined with the standard $L^2$ inner product to
obtain a Hilbert space $\mc{H}^{\perp}$ as completion of $C^{1}$ with respect to the resulting norm. 
In section IV a trace operator is derived for functions belonging to $\mc{H}^{\perp}$ 
by requiring $\bol{w}$ to be tangent to the boundary.
In section V we obtain a Poincar\'e-like estimate for the orthogonal component
of the gradient of functions in $\mc{H}^{\perp}_{0}$ (the set of functions in $\mc{H}^{\perp}$ with zero trace). 
In particular, we show that
constraining vector fields with finite helicity always guarantee this type of estimate,
and obtain the Poincar\'e constant in terms of the helicty density.
Examples of estimates for specific constraining vector fields are given in section VI.
In section VII we use the Poincar\'e-like estimate
to apply Riesz's representation theorem and prove existence and uniqueness
of solution to the orthogonal Poisson equation in the Hilbert space $\mc{H}^{\perp}_{0}$.
Section VII is for the conclusion.

\section{Orthogonal Laplacian operator and Orthogonal Poisson equation}

The $3$-dimensional case of $\mathbb{R}^{3}$ is discussed.
Let $\bol{w}\in C^{\infty}\lr{\ov{\Omega}}$ be a smooth non-vanishing vector field in a smoothly bounded connected domain $\Omega\subset\mathbb{R}^{3}$. The direction of $\bol{w}$ is said \emph{parallel}, and the others \emph{orthogonal}, \emph{normal} or \emph{perpendicular}. 

\begin{mydef} (orthogonal and parallel gradients)\label{def8_1}\\
Let $\bol{w}\in C^{\infty}\lr{\ov{\Omega}}$ be a smooth non-vanishing vector field in a smoothly bounded connected domain $\Omega\subset\mathbb{R}^{3}$. In $\Omega$, the orthogonal gradient $\nabla_{\perp}$ and the parallel gradient $\nabla_{\parallel}$ of a real valued function $u\in C^{1}\lr{\Omega}$ with respect to $\bol{w}$ are defined as:
\begin{equation}
\nabla_{\perp}u=\frac{\bol{w}\cp\left(\nabla u\cp\bol{w}\right)}{w^{2}},~~~~\nabla_{\parallel}u=\frac{\bol{w}}{w^{2}}\left(\bol{w}\cdot\nabla u\right).
\end{equation}\label{NGrad}
\end{mydef}

\noindent Notice that $\nabla u=\nabla_{\perp}u+\nabla_{\parallel} u$.

\begin{mydef} (Orthogonal and parallel Laplacian)\label{def8_3}\\ 
The orthogonal Laplacian $\Delta_{\perp}$ and the parallel Laplacian $\Delta_{\parallel}$ of a real valued function $u\in C^{2}\lr{\Omega}$ with orthogonal and parallel gradients given by definition \ref{NGrad} are defined as:
\begin{equation}
\Delta_{\perp}u=\di{\lr{\nabla_{\perp}u}},~~~~\Delta_{\parallel}u=\di{\lr{\nabla_{\parallel}u}}.\label{NLap}
\end{equation}
\end{mydef}

\begin{mydef}(Orthogonal Poisson equation)\label{def8_5}\\
Let $\Omega\subset\mathbb{R}^3$ be a smoothly bounded connected domain with boundary $\p\Omega$.
Let $\phi$ be a known real valued function.
The orthogonal Poisson equation with respect to a real valued function $u$ is a second order partial differential equation:
\begin{equation}
\begin{split}
\Delta_{\perp}u&=\phi~~~~\rm{in}~~~~\Omega,\\
u&=0~~~~\rm{on}~~~~\p\Omega.\label{NLapEq}
\end{split}
\end{equation}
\end{mydef}
\noindent If $\phi\in C\lr{\Omega}$ and $u\in C^{2}\lr{\Omega}$, $u$ is a classical solution to \eqref{NLapEq}.
Evidently \eqref{NLapEq} is not an elliptic PDE (see Refs.\cite{Gilbarg,Evans,Brezis} for the definition of ellipticity) because the coefficient matrix $g^{jk}=-\epsilon^{jlm}\epsilon^{mnk}\hat{w}^{l}\hat{w}^{n}$ is not positive definite (any vector $\xi\in\mathbb{R}^{n}$ aligned with $\bol{w}$ belongs to the kernel of such matrix, $g^{jk}w^{k}=0$ $\forall j$). 

In the following we construct a weak solution to \eqref{NLapEq} under the conditions on the vector field $\bol{w}$ described below.

\begin{remark}
In general, the solution to \eqref{NLapEq} is not unique. For example, if $\bol{w}=\nabla z$, $\phi=0$, and $\Omega$ is periodic in the $x$ and $y$ directions, the orthogonal Laplacian reduces to $\Delta_{\perp}=\p^{2}_{x}+\p_{y}^{2}$, giving solutions of the type $u=u\lr{z}$.
Similarly, if $\hb{w}=\nabla\rho$, with $\rho=\sqrt{x^2+y^2+z^2}$ the radial coordinate of a spherical coordinate system $\lr{\rho,\theta,\phi}$, and $\Omega$ is a sphere of radius $\tilde{\rho}>0$, the orthogonal Laplacian reduces to $\Delta_{\perp}=\frac{1}{\rho^{2}\sin{\theta}}\left[\p_{\theta}\lr{\sin{\theta}\p_{\theta}}+\frac{1}{\sin{\theta}}\p_{\phi}^{2}\right]$, giving solutions of the type $u=u\lr{\rho}$. 
\end{remark}

\section{Orthogonal inner product}

\noindent In what follows, we denote $\hb{w}=\bol{w}/w$.

\begin{mydef} (Orthogonal gradient product)\\
Let $\bol{w}\in C^{\infty}\lr{\ov{\Omega}}$ be a smooth non-vanishing vector field in a smoothly bounded connected domain $\Omega\subset\mathbb{R}^{3}$. 
The orthogonal gradient product of $u,v\in C^{1}\left(\Omega\right)$ with respect to $\bol{w}$ is the bilinear form:
\begin{equation}
\left(u,v\right)_{\perp}=\lr{\nabla_{\perp}u,\nabla_{\perp}v}
=\int_{\Omega}{\nabla_{\perp}u\cdot\nabla_{\perp}v}\,dV.\label{OBP}
\end{equation} 
where $\lr{f,g}$ denotes the standard $L^{2}$ inner product.
We define:
\begin{equation}
\Norm{u}^{2}=\lr{u,u},~\Norm{\nabla_{\perp}u}^{2}=\lr{u,u}_{\perp}.
\end{equation}
\end{mydef}

\begin{proposition} (Orthogonal gradient inner product on $C^{1}_{0}\lr{\Omega}$)
\begin{enumerate}
\item
If $h=\bol{w}\cdot\nabla\cp\bol{w}\neq 0$, the bilinear form $\lr{u,v}_{\perp}$ satisfies the axioms of inner-product on $C^{1}_{0}\lr{\Omega}$;
hence, $\Norm{\nabla_{\perp}u}$ is a norm on $C^{1}_{0}\lr{\Omega}$.
\item
The function space $C^{1}_{0}\lr{\Omega}$ equipped with the norm $\Norm{\nabla_{\perp}u}$ is a pre-Hilbert space and, by the theorem of completion, it can be completed to define a Hilbert space $\mc{H}^{\perp}_{0}\lr{\Omega}$. 
\end{enumerate}
\end{proposition}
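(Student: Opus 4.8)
The plan is to check the inner-product axioms for $\lr{\cdot,\cdot}_{\perp}$ in turn, leaving the definiteness axiom---where the helicity hypothesis is decisive---for last. Symmetry is inherited from that of the Euclidean dot product and of the $L^{2}$ pairing; bilinearity holds because $\nabla_{\perp}$ is a linear operator in its argument (the gradient composed with the pointwise linear map $\bol{a}\mapsto\bol{w}\cp\lr{\bol{a}\cp\bol{w}}/w^{2}$) and the integral is linear; and positivity, $\lr{u,u}_{\perp}=\int_{\Omega}\abs{\nabla_{\perp}u}^{2}\,dV\geq 0$, is immediate. Everything thus reduces to the implication $\lr{u,u}_{\perp}=0\implies u=0$ on $C^{1}_{0}\lr{\Omega}$.

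Since $u\in C^{1}$, the integrand $\abs{\nabla_{\perp}u}^{2}$ is continuous, so $\lr{u,u}_{\perp}=0$ forces $\nabla_{\perp}u\equiv 0$ in $\Omega$. By Definition \ref{def8_1} together with $\nabla u=\nabla_{\perp}u+\nabla_{\parallel}u$, this says exactly that $\nabla u$ is everywhere parallel to $\bol{w}$, i.e. $\nabla u=\lambda\bol{w}$ with $\lambda=\lr{\bol{w}\cdot\nabla u}/w^{2}$ a continuous scalar field. The heart of the matter is to prove $\lambda\equiv 0$, and this is where $h\neq 0$ is indispensable. Formally, taking the curl of $\nabla u=\lambda\bol{w}$ gives $0=\nabla\lambda\cp\bol{w}+\lambda\,\cu\bol{w}$, and dotting with $\bol{w}$ annihilates the first term (a scalar triple product with a repeated factor), leaving $\lambda\,h=\lambda\,\lr{\bol{w}\cdot\cu\bol{w}}=0$; as $h$ never vanishes this yields $\lambda\equiv 0$, hence $\nabla u\equiv 0$. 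On the connected domain $\Omega$ this makes $u$ constant, and the zero boundary data of $u\in C^{1}_{0}\lr{\Omega}$ then forces $u\equiv 0$.

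The one delicate point---and the main technical obstacle---is that for $u\in C^{1}$ only, $\lambda$ is merely continuous, so the formal step ``take the curl'' is not licensed pointwise. I would bypass it by using the weak identity $\int_{\Omega}\nabla u\cdot\lr{\cu\bol{\phi}}\,dV=0$, valid for every $\bol{\phi}\in C^{\infty}_{0}\lr{\Omega;\mathbb{R}^{3}}$ by a single integration by parts (the boundary term vanishes since $\cu\bol{\phi}$ is compactly supported, and $\di\,\cu\equiv 0$). Specializing to the test field $\bol{\phi}=\mu\bol{w}$ with $\mu\in C^{\infty}_{0}\lr{\Omega}$, and using $\bol{w}\cdot\lr{\cu\lr{\mu\bol{w}}}=\mu\,\lr{\bol{w}\cdot\cu\bol{w}}=\mu h$, the identity collapses to $\int_{\Omega}\lambda\,\mu\,h\,dV=0$ for all such $\mu$. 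Since $\lambda h$ is continuous, the fundamental lemma of the calculus of variations gives $\lambda h\equiv 0$, and $h\neq 0$ again forces $\lambda\equiv 0$. This completes part 1.

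For part 2, once $\Norm{\nabla_{\perp}u}$ is established as a genuine norm coming from the inner product $\lr{\cdot,\cdot}_{\perp}$, it automatically satisfies the parallelogram law, so $C^{1}_{0}\lr{\Omega}$ with this norm is a pre-Hilbert space. Its metric completion is then a Hilbert space by the standard completion theorem, the inner product extending by uniform continuity; this defines $\mc{H}^{\perp}_{0}\lr{\Omega}$. It is worth noting that the definiteness proved in part 1 is precisely what ensures the completion is a genuine Hilbert space and not a quotient, since without it nonzero functions with $\nabla_{\perp}u\equiv 0$ would have to be identified with $0$.
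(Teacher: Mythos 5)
Your proof is correct, and it follows the same overall strategy as the paper: reduce everything to the definiteness axiom, deduce from $\nabla_{\perp}u\equiv 0$ that $\nabla u$ is everywhere parallel to $\bol{w}$, and then use $h\neq 0$ to rule out a nonzero proportionality factor. The one place where you genuinely diverge is the step where the helicity enters. The paper argues pointwise: writing $\nabla u=\alpha\hb{w}$, it says that $\alpha\neq 0$ would give $\hb{w}=\alpha^{-1}\nabla u$, ``contradicting the non-integrability condition $\hat{h}\neq 0$.'' That contradiction implicitly requires taking a curl of $\alpha^{-1}\nabla u$, i.e.\ differentiating quantities that are only continuous when $u\in C^{1}$ --- exactly the regularity issue you flag. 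Your replacement, the weak identity $\int_{\Omega}\nabla u\cdot\lr{\cu\bol{\phi}}\,dV=0$ for $\bol{\phi}\in C^{\infty}_{0}$ specialized to $\bol{\phi}=\mu\bol{w}$, which yields $\int_{\Omega}\lambda\,\mu\,h\,dV=0$ and hence $\lambda h\equiv 0$ by continuity, is rigorous at the stated regularity and is a genuine improvement over the argument as printed: it buys the conclusion without upgrading $u$ to $C^{2}$ or invoking the Frobenius/non-integrability statement informally. Part 2 is handled the same way in both (standard completion of a pre-Hilbert space), and your closing remark about definiteness preventing the completion from being a quotient is accurate.
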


\begin{proof}
For any $u,v\in C^{1}_{0}\lr{\Omega}$ the bilinear form $\lr{u,v}_{\perp}$ is symmetric, and $\lr{u,u}_{\perp}\geq 0$. 
The only non-trivial statement is $\lr{u,u}_{\perp}=0\iff u=0$. To see this, observe that:
\begin{equation}
\Norm{\nabla_{\perp}u}^2=\int_{\Omega}{\abs{\hb{w}\cp\nabla u}^2}\,dV.
\end{equation}
Due to continuity of the integrand, the integral vanishes if and only if $\hb{w}\cp\nabla u=\bol{0}$ at any point in $\Omega$.
This implies $\nabla u=\alpha\hb{w}$ for some function $\alpha$. If $\alpha\neq 0$ one has $\hb{w}=\alpha^{-1}\nabla u$, contradicting
the non-integrability condition $\hat{h}=w^{-2}h\neq 0$. Hence $\alpha=0$ and $u=\rm{constant}$. From the boundary condition and the continuity of $u$, we conclude $u=0$ in $\Omega$.   
Hence, $\Norm{\nabla_{\perp}u}$ is a norm on $C^{1}_{0}\lr{\Omega}$. 
\end{proof}

\noindent In order to define a topology, we introduce the following bilinear product:

\begin{mydef} (Orthogonal inner product)\\
Let $\bol{w}\in C^{\infty}\lr{\ov{\Omega}}$ be a smooth non-vanishing vector field in a smoothly bounded connected domain $\Omega\subset\mathbb{R}^{3}$. 
The orthogonal inner product of $u,v\in C^{1}\left(\Omega\right)$ with respect to $\bol{w}$ is the bilinear form:
\begin{equation}
\lr{u,v}_{\mc{H}^{\perp}}=\lr{u,v}+\left(u,v\right)_{\perp}=\lr{u,v}+\lr{\nabla_{\perp}u,\nabla_{\perp}v}
=\int_{\Omega}{\left[uv+\left(\nabla_{\perp}u\cdot\nabla_{\perp}v\right)\right]}\,dV.\label{NIP}
\end{equation} 
We define:
\begin{equation}
\Norm{u}_{\mc{H}^{\perp}}^{2}=\lr{u,u}_{\mc{H}^{\perp}}={\Norm{u}^{2}+\Norm{\nabla_{\perp}u}^{2}}.
\end{equation}
\end{mydef}

\begin{proposition} (Orthogonal Inner product on $C^{1}\lr{\Omega}$)
\begin{enumerate}
\item
The bilinear form $\lr{u,v}_{\mc{H}^\perp}$ satisfies the axioms of inner-product on $C^{1}\lr{\Omega}$;
hence, $\Norm{u}_{\mc{H}^{\perp}}$ is a norm on $C^{1}\lr{\Omega}$.
\item
The function space $C^{1}\lr{\Omega}$ equipped with the norm $\Norm{u}_{\mc{H}^{\perp}}$ is a pre-Hilbert space and, by the theorem of completion, it can be completed to define a Hilbert space $\mc{H}^{\perp}\lr{\Omega}$. \end{enumerate}
\end{proposition}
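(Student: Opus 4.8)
The plan is to reduce both claims to facts already in hand. For the first part, I would verify the inner-product axioms for $\lr{u,v}_{\mc{H}^{\perp}}=\lr{u,v}+\lr{u,v}_{\perp}$ by treating the two summands separately. Symmetry and bilinearity are inherited termwise: the standard $L^{2}$ product $\lr{u,v}$ is symmetric and bilinear, and the orthogonal gradient product $\lr{u,v}_{\perp}=\lr{\nabla_{\perp}u,\nabla_{\perp}v}$ is symmetric and bilinear as well, being itself an $L^{2}$ product of the (linear in $u$) orthogonal gradients. Positive semi-definiteness is immediate, since $\lr{u,u}_{\mc{H}^{\perp}}=\Norm{u}^{2}+\Norm{\nabla_{\perp}u}^{2}$ is a sum of two non-negative terms. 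I would also note that the norm is finite on $C^{1}\lr{\Omega}$: because $\hb{w}$ is smooth on the compact $\ov{\Omega}$ and $\nabla_{\perp}u=\hb{w}\cp\lr{\nabla u\cp\hb{w}}$ is an orthogonal projection of $\nabla u$, one has $\abs{\nabla_{\perp}u}\leq\abs{\nabla u}$ pointwise, so both integrals converge on the bounded domain.

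The decisive axiom is definiteness, and here the argument is in fact simpler than in the preceding proposition on $C^{1}_{0}\lr{\Omega}$. There, positive definiteness of $\Norm{\nabla_{\perp}u}$ alone required the non-integrability hypothesis $h\neq 0$. Now, because the full $L^{2}$ term $\Norm{u}^{2}$ is included, the condition $\lr{u,u}_{\mc{H}^{\perp}}=0$ forces each non-negative summand to vanish, in particular $\Norm{u}^{2}=\int_{\Omega}u^{2}\,dV=0$; by continuity of $u$ this gives $u\equiv 0$ in $\Omega$. Hence no condition on the helicity is needed for this proposition, and $\Norm{u}_{\mc{H}^{\perp}}$ is a genuine norm, the triangle inequality following from the Cauchy--Schwarz inequality associated with the now-established inner product.

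For the second part, once the inner-product axioms hold, $C^{1}\lr{\Omega}$ equipped with $\lr{\cdot,\cdot}_{\mc{H}^{\perp}}$ is by definition a pre-Hilbert (inner-product) space. I would then invoke the abstract theorem of completion: every inner-product space admits a completion, unique up to isometric isomorphism, which is a Hilbert space; denote it $\mc{H}^{\perp}\lr{\Omega}$.

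I do not expect a genuine analytic obstacle in this proposition --- both parts are essentially bookkeeping together with an abstract completion. The only point demanding care is that the completion is produced abstractly (as equivalence classes of Cauchy sequences), so that at this stage the elements of $\mc{H}^{\perp}\lr{\Omega}$ are not yet concretely realized as functions possessing a weak orthogonal gradient, nor is a boundary value assigned to them. Making that identification, and in particular defining a trace so that the zero-boundary-condition subspace $\mc{H}^{\perp}_{0}$ is meaningful, is precisely the content deferred to the next section; it is there, rather than here, that the real work lies.
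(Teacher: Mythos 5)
Your proof is correct and follows essentially the same route as the paper's, which simply asserts the inner-product axioms and invokes the completion theorem; your key observation --- that definiteness now follows from the $L^{2}$ term alone, so that, unlike the preceding proposition on $C^{1}_{0}\lr{\Omega}$, no helicity hypothesis is needed --- is exactly the point, as is your remark that the concrete realization of elements of the completion and their traces is deferred to the next section. One minor caution: your claim that both integrals converge for every $u\in C^{1}\lr{\Omega}$ because the domain is bounded actually requires $u\in C^{1}\lr{\ov{\Omega}}$ (or an implicit restriction to elements of finite norm), since a function that is merely $C^{1}$ on the open set need not be square-integrable near $\p\Omega$ --- though the paper glosses over this point as well.
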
 \label{PropX_2}

\begin{proof}
For any $u,v\in C^{1}\lr{\Omega}$ the bilinear form $\lr{u,v}_{\mc{H}^\perp}$ is symmetric, $\lr{u,u}_{\mc{H}^\perp}\geq 0$, and $\lr{u,u}_{\mc{H}^\perp}=0\iff u=0$. Then, $\Norm{u}_{\mc{H}^{\perp}}$ is a norm on $C^{1}\lr{\Omega}$. We identify with $\mc{H}^{\perp}\lr{\Omega}$ the completion of $C^{1}\lr{\Omega}$ with respect to this norm.
\end{proof}

\begin{remark}
Evidently $H^{1}\lr{\Omega}\subset \mc{H}^{\perp}\lr{\Omega}\subset L^{2}\lr{\Omega}$ ($H^{1}\lr{\Omega}$ is the standard Sobolev space of order 1). 
\end{remark}

\begin{remark}
The derivative $\nabla_{\perp}$ of a function $u\in\mc{H}^{\perp}\lr{\Omega}$ must now be evaluated in the sense of distribution, i.e. for a test function $\phi\in C_{0}^{\infty}\lr{\Omega}$, 
\begin{equation}
\int_{\Omega}{\lr{\nabla_{\perp}u}^{i}\phi}\,dV=
\lim_{n\rightarrow\infty}\epsilon^{ijk}\epsilon^{klm}\int_{\Omega}{\hat{w}^{j}\hat{w}^{m}\frac{\p u_{n}}{\p x^{l}}\phi\,dV}=
-\int_{\Omega}{u\left[ \lr{\nabla_{\perp}\phi}^{i}-\nabla\cdot\lr{\hat{w}^{i}\hb{w}}\phi\right]}\,dV.
\end{equation}
\end{remark}

\section{Trace operator}

\noindent The next task is to study how boundary conditions apply for the members of $\mc{H}^{\perp}\lr{\Omega}$.
Here, we follow the proof given in Ref. \cite{Evans2} for the trace theorem in the standard Sobolev space $W^{1,p}$
and obtain a trace operator for $\mc{H}^{\perp}$. We denote by $\bol{n}$ the outward unit normal vector onto the boundary $\p\Omega$.

\begin{lemma} (Trace operator)\\
Let $\bol{w}\in C^{\infty}\lr{\ov{\Omega}}$ be a smooth non-vanishing vector field in a smoothly bounded connected domain $\Omega\subset\mathbb{R}^{3}$ such that $\bol{n}\cdot\bol{w}=0$ on $\p\Omega$. Then 
there exists a bounded linear operator $T:\mc{H}^{\perp}\lr{\Omega}\rightarrow L^{2}\lr{\p\Omega}$ such that 
\begin{equation}
Tu=u\rvert_{\p\Omega}\,\,\,\,\,\,\,\,if\,\,\,\,u\in \mc{H}^{\perp}\lr{\Omega}\cap C^{1}\lr{\ov{\Omega}}
\end{equation}
and
\begin{equation}
\Norm{Tu}^{2}_{L^{2}\lr{\p\Omega}}\leq C\Norm{u}^{2}_{\mc{H}^{\perp}\lr{\Omega}}\label{trace_bound}
\end{equation}
with a constant $C>0$ independent of $u$.
\end{lemma}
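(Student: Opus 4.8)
The plan is to follow the structure of the standard trace theorem (as in Ref.~\cite{Evans2}), but to exploit the tangency hypothesis $\bol{n}\cdot\bol{w}=0$ to compensate for the fact that $\Norm{u}_{\mc{H}^{\perp}}$ controls only the orthogonal part $\nabla_{\perp}u$ of the gradient, and not the full gradient. It suffices to establish the a~priori estimate $\Norm{u}^{2}_{L^{2}\lr{\p\Omega}}\leq C\Norm{u}^{2}_{\mc{H}^{\perp}}$ for all $u\in C^{1}\lr{\ov{\Omega}}$, after which $T$ is obtained by continuous extension. Where the classical argument flattens the boundary and differentiates in the coordinate-normal direction---thereby producing the full normal derivative $\bol{n}\cdot\gr u$, which $\Norm{\cdot}_{\mc{H}^{\perp}}$ does not bound---I would instead realize the boundary measure through a single globally defined vector field that is everywhere perpendicular to $\bol{w}$, so that only $\nabla_{\perp}u$ enters.

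The key device is the construction of a field $\bol{X}\in C^{\infty}\lr{\ov{\Omega}}$ with the three properties $\bol{X}=\bol{n}$ on $\p\Omega$, $\bol{X}\cdot\bol{w}=0$ throughout $\ov{\Omega}$, and $\bol{X}$ supported in a collar of $\p\Omega$. I would build it by first extending the outward normal to a smooth field $\bol{N}=\gr d$ in a collar ($d$ the signed distance to $\p\Omega$), projecting off the parallel component to set $\bol{V}=\bol{N}-\lr{\hb{w}\cdot\bol{N}}\hb{w}$, and multiplying by a cutoff $\zeta$ equal to one near $\p\Omega$ and supported in the collar, so that $\bol{X}=\zeta\bol{V}$. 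The hypothesis $\bol{n}\cdot\bol{w}=0$ is exactly what guarantees $\bol{V}=\bol{n}$ on $\p\Omega$ (the projection removes nothing there), while $\bol{V}\cdot\bol{w}=0$ holds identically because $\lr{\hb{w}\cdot\bol{N}}\lr{\hb{w}\cdot\bol{w}}=\bol{N}\cdot\bol{w}$.

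With $\bol{X}$ in hand, I would apply the divergence theorem to $\bol{X}\abs{u}^{2}$ over $\Omega$ in one step, with no boundary flattening required. The boundary term reduces to $\int_{\p\Omega}\lr{\bol{X}\cdot\bol{n}}\abs{u}^{2}\,dS=\Norm{u}^{2}_{L^{2}\lr{\p\Omega}}$ since $\bol{X}\cdot\bol{n}=1$ on $\p\Omega$. In the interior term $\int_{\Omega}\lr{\di\bol{X}}\abs{u}^{2}+2u\,\bol{X}\cdot\gr u\,dV$ the crucial observation is that $\bol{X}\cdot\gr u=\bol{X}\cdot\nabla_{\perp}u$, because $\bol{X}\perp\bol{w}$ annihilates $\nabla_{\parallel}u$. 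Bounding $\di\bol{X}$ and $\abs{\bol{X}}$ by constants on the compact set $\ov{\Omega}$ and using Young's inequality $2\abs{u}\abs{\nabla_{\perp}u}\leq\abs{u}^{2}+\abs{\nabla_{\perp}u}^{2}$ then yields $\Norm{u}^{2}_{L^{2}\lr{\p\Omega}}\leq C\lr{\Norm{u}^{2}+\Norm{\nabla_{\perp}u}^{2}}=C\Norm{u}^{2}_{\mc{H}^{\perp}}$.

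Finally, I would extend $T$ from $C^{1}\lr{\ov{\Omega}}$ to all of $\mc{H}^{\perp}\lr{\Omega}$ by density: given $u\in\mc{H}^{\perp}$, choose $u_{m}\in C^{1}\lr{\ov{\Omega}}$ with $u_{m}\to u$ in $\Norm{\cdot}_{\mc{H}^{\perp}}$; the estimate applied to $u_{m}-u_{k}$ makes $\{u_{m}\rvert_{\p\Omega}\}$ Cauchy in $L^{2}\lr{\p\Omega}$, and I would define $Tu$ as its limit, with linearity, boundedness, and the identity $Tu=u\rvert_{\p\Omega}$ on $C^{1}\lr{\ov{\Omega}}$ following in the usual way. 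I expect the main obstacle to be the estimate rather than the extension: the whole difficulty is that the degenerate norm sees only $\nabla_{\perp}u$, and the construction of a global $\bol{w}$-perpendicular field matching $\bol{n}$ on the boundary is the single step that makes the tangency condition do its work. A secondary technical point to confirm is the density of $C^{1}\lr{\ov{\Omega}}$ in $\mc{H}^{\perp}\lr{\Omega}$, which should follow from a global mollification argument paralleling the standard Sobolev theory.
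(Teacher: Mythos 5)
Your proposal is correct and rests on the same essential insight as the paper's proof --- that the tangency condition $\bol{n}\cdot\bol{w}=0$ lets you realize the boundary integral through a vector field perpendicular to $\bol{w}$, so that only $\nabla_{\perp}u$ (and never the uncontrolled parallel derivative) appears in the interior terms --- but the technical execution differs in a worthwhile way. The paper follows the Evans template literally: it flattens the boundary near each point, integrates $\p_{n}\lr{\zeta u^{2}}$ over a half-ball, invokes $\abs{u_{n}}\leq\abs{\nabla_{\perp}u}$, and then patches the local estimates together by compactness; the step where it declares that ``we define $\bol{n}$ to be such that $\bol{n}\cdot\bol{w}=0$ holds also in $B^{+}$'' is precisely an interior extension of the normal that stays perpendicular to $\bol{w}$, asserted locally rather than constructed. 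Your global field $\bol{X}=\zeta\bigl(\gr d-\lr{\hb{w}\cdot\gr d}\hb{w}\bigr)$ makes that extension explicit and lets you replace the flattening-plus-covering argument with a single application of the divergence theorem to $\bol{X}\abs{u}^{2}$; the algebra $\bol{V}\cdot\bol{w}=\bol{N}\cdot\bol{w}-\lr{\hb{w}\cdot\bol{N}}w=0$ and the identity $\bol{X}\cdot\gr u=\bol{X}\cdot\nabla_{\perp}u$ are both correct, and the smoothness of the signed distance in a collar of a smooth boundary is standard. What your route buys is a shorter and more transparent derivation of the a priori estimate, with the constant visibly controlled by $\sup_{\ov{\Omega}}\abs{\bol{X}}$ and $\sup_{\ov{\Omega}}\abs{\di\bol{X}}$; what the paper's route buys is closer adherence to the textbook proof it cites. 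One caution: the density of $C^{1}\lr{\ov{\Omega}}$ in $\mc{H}^{\perp}\lr{\Omega}$, which you correctly flag as a secondary point, is not quite a routine mollification --- since $\mc{H}^{\perp}$ is defined as the completion of $C^{1}\lr{\Omega}$, one must approximate up to the boundary, and the paper devotes the entire second half of its proof to this (inward translation $\bol{x}^{\epsilon}=\bol{x}+\lambda\epsilon\p_{3}$ near the boundary, followed by a partition of unity). Your proposal would need to carry out that step, or cite it, before the continuous extension of $T$ is legitimate.
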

\begin{proof} Take $u\in C^{1}\lr{\ov{\Omega}}$. 
First assume that $\p\Omega$ is flat in some region around the point $\bol{x}_{0}\in\p\Omega$, which lies in the plane $n=0$. Let $B$ and $\hat{B}$ be two concentric balls centered at $\bol{x}_{0}$ of radius $r$ and $r/2$ respectively, with $B\cap\p\Omega$ still in the plane $n=0$. Set $B^{+}=B\cap\ov{\Omega}$. Define $\Gamma=\hat{B}\cap\p\Omega$ and consider a function $\zeta\in C_{c}^{\infty}\lr{B}$ such that $\zeta\geq 0$ in $B$ and $\zeta=1$ in $\hat{B}$. It follows that:
\begin{equation}
\int_{\Gamma}{u^{2}}\,dS\leq \int_{n=0}{\zeta u^{2}}\,dS=-\int_{B^{+}}\frac{\p\lr{\zeta u^{2}}}{\p n}\,dV=
-\int_{B^{+}}{\left[u^{2}\zeta_{n}+2\abs{u}\lr{\sgn u} u_{n}\zeta\right]}\,dV\leq  C\int_{B^{+}}{\lr{u^{2}+u_{n}^{2}}}\,dV.
\end{equation} 
In the last passage Young's inequality $\abs{u}\abs{u_{n}}\leq \lr{\abs{u}^{2}+\abs{u_{n}}^{2}}/2$ was used.
Next, observe that $\bol{n}\cdot\bol{w}=0$ implies $\abs{u_{n}}=\abs{\bol{n}\cdot\nabla u}\leq\abs{\nabla_{\perp}u}$ on $\p\Omega$. Furthermore, we define $\bol{n}$ to be such that $\bol{n}\cdot\bol{w}=0$ holds also in $B^{+}$. Therefore:
\begin{equation}
\int_{\Gamma}{u^{2}}\,dS\leq C\lr{\Norm{u}^{2}+\Norm{\nabla_{\perp}u}^{2}}.
\end{equation} 
If $\p\Omega$ is not flat around $\bol{x}_{0}$, the boundary can be straighten out, and the same procedure applies. 
Since $\p\Omega$ is compact there is a finite number $m$ of domains $\Gamma_{i}$ such that $\cup_{i=1}^{m}\Gamma_{i}=\p\Omega$, leading to result \eqref{trace_bound} for any $u\in C^{1}\lr{\ov{\Omega}}$.

Suppose now that $u\in \mc{H}^{\perp}\lr{\Omega}$. 
We look for a sequence $u_{m}\in C^{1}\lr{\ov{\Omega}}$ approximating $u$ in $\mc{H}^{\perp}\lr{\Omega}$. Since the boundary is smooth, there are a radius $r>0$ and a $C^{1}$ function $\gamma:\mathbb{R}^{2}\rightarrow\mathbb{R}^{3}$ by which $\Omega\cap B\lr{\bol{x}^{0},r}=\left\{\bol{x}\in B\lr{\bol{x}^{0},r}\rvert x^{3}>\gamma\lr{x^{1},x^{2}}\right\}$. The coordinate system $\lr{x^{1},x^{2},x^{3}}$ can be obtained
by relabeling the axes. Define $V=\Omega\cap B\lr{\bol{x}_{0},r/2}$, $\bol{x}^{\epsilon}=\bol{x}+\lambda\epsilon \p_{3}$, and $u_{\epsilon}\lr{\bol{x}}=u\lr{\bol{x}^{\epsilon}}$ $\lr{\bol{x}\in V}$. 
Thanks to the inward shift $\lambda\epsilon \p_{3}$, we have that, $\forall \bol{x}\in V$, $B\lr{\bol{x}^{\epsilon},\epsilon}\subset \Omega\cap B\lr{\bol{x}_{0},r}$ for a sufficiently large $\lambda>0$ and small $\epsilon>0$.  
By the density argument, we can take a Cauchy sequence $v^{\epsilon}\in C^{1}\lr{\ov{V}}$ $\lr{\bol{x}\in V}$ approximating $u^{\epsilon}$ and converging to $u$ in $\mc{H}^{\perp}\lr{V}$ for $\epsilon\rightarrow 0$.
On the other hand, since $\p\Omega$ is compact, there are a finite number $N$ of points $\bol{x}_{0i}$ such that
$\p\Omega\subset\cup_{i=1}^{N}B^{0}\lr{\bol{x}_{0i},r_{i}/2}$, where $B^{0}$ is the open ball, and functions $v_{i}\in C^{1}\lr{\ov{V}^{i}}$, $1\leq i\leq N$, $V^{i}=\Omega\cap B^{0}\lr{\bol{x}_{0i},r_{i}/2}$, such that $\Norm{v_{i}-u}_{\mc{H}^{\perp}\lr{V^{i}}}\leq\delta$ with $\delta >0$. 
Next, choose a domain $V^{0}\subset\subset \Omega$ such that $\Omega\subset\cup_{i=0}^{N}V^{i}$,
$\Norm{v_{0}-u}_{\mc{H}^{\perp}\lr{V^{0}}}\leq\delta$, $v_{0}\in C^{1}\lr{\ov{V}^{0}}$. Let $\left\{\zeta_{i}\right\}_{i=0}^{N}$ be the partition to unity corresponding to the open sets $\left\{V^{0},B^{0}\lr{\bol{x}_{1}^{0},r_{1}/2},...\right\}$. By defining $v=\sum_{i=0}^{N}\zeta_{i}v_{i}\in C^{1}\lr{\ov{\Omega}}$ and noting that $u=\sum_{i=0}^{N}\zeta_{i}u$, we have
$\Norm{\nabla_{\perp}\lr{v-u}}_{L^{2}\lr{\Omega}}\leq\sum_{i=0}^{N}\Norm{\nabla_{\perp}\lr{\zeta_{i}v_{i}-\zeta_{i}u}}_{L^{2}\lr{V^{i}}}\leq C\sum_{i=0}^{N}\Norm{v_{i}-u}_{\mc{H}^{\perp}\lr{V^{i}}}\leq C\delta \lr{N+1}$, which goes to zero when $\delta\rightarrow 0$. Therefore, $v$ can be taken as an element of the sequence $u_{m}$.
Then, the trace sequence $Tu_{m}$ is a Cauchy sequence in $L^{2}\lr{\p\Omega}$ satisfying $\Norm{Tu_{m}-Tu_{n}}_{L^{2}\lr{\p\Omega}}^{2}\leq C\lr{\Norm{u_{m}-u_{n}}^{2}+\Norm{\nabla_{\perp}\lr{u_{m}-u_{n}}}^{2}}$. We can thus define the $L^{2}\lr{\p\Omega}$ limit $Tu=\lim_{m\rightarrow\infty}Tu_{m}$. 
\end{proof}

\noindent Define $\mc{H}^{\perp}_{0}={\rm{ker}}\lr{T}$ to be the set of functions in $\mc{H}^{\perp}\lr{\Omega}$ that have zero trace $Tu=0$.


\section{Poincar\'e-like inequality}

We introduce an extended bounded domain $\tilde{\Omega} \supset \Omega$ such that $\partial\tilde{\Omega}\cap\partial\Omega=\emptyset$. 

\begin{lemma}\label{LM1}
Let $\bol{w}\in C^{\infty}(\tilde{\Omega})$ be a smooth vector field in a bounded domain $\tilde{\Omega}\subset\mathbb{R}^3$
satisfying $\bol{n}\cdot\bol{w}=0$ on $\p\Omega$. 
Assume $\abs{h}=\abs{\JI{w}}\geq \delta>0$ in $\tilde{\Omega}$. Then there exists a finite open cover $\left\{U_{1},...,U_{\alpha}\right\}$ of $\ov{\Omega}$, and coordinates $\lr{\ell_{i},\psi_{i},\theta_{i}}\in C^{\infty}\lr{\ov{U}_{i}}$, $i=1,...,\alpha$,  such that $\cup_{i=1}^{\alpha}U_{i}\supset\ov{\Omega}$ and $\nabla\cp\hb{w}=\nabla\psi_{i}\cp\nabla\theta_{i}$ in $\ov{U}_{i}$. 
\end{lemma}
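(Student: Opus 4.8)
The plan is to reduce the statement to a local Clebsch representation of the divergence-free field $\bol{B}:=\cu\hb{w}$ and then patch by compactness. First I would check that $\bol{B}$ is nowhere vanishing on $\tilde{\Omega}$. Since $\bol{w}$ is smooth and nonvanishing, $w=\abs{\bol{w}}>0$ is smooth, and
\begin{equation}
\cu\hb{w}=\frac{1}{w}\,\cu\bol{w}-\frac{1}{w^{2}}\,\gr w\cp\bol{w}.
\end{equation}
Dotting with $\hb{w}=\bol{w}/w$ and using that the scalar triple product $\bol{w}\cdot\lr{\gr w\cp\bol{w}}$ vanishes (two equal entries), I obtain
\begin{equation}
\hb{w}\cdot\lr{\cu\hb{w}}=\frac{\JI{w}}{w^{2}}=\frac{h}{w^{2}}=\hat{h}.
\end{equation}
Hence $\abs{h}\geq\delta>0$ forces $\hb{w}\cdot\bol{B}\neq0$, so $\bol{B}$ cannot vanish at any point of $\tilde{\Omega}$. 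Being a curl, $\bol{B}$ is also solenoidal, $\di\bol{B}=0$. Thus $\bol{B}$ is a smooth, nowhere-zero, divergence-free field on a neighborhood of $\ov{\Omega}$.

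Next I would construct the coordinates locally. Fix $p\in\ov{\Omega}$. Since $\bol{B}(p)\neq\bol{0}$, I choose a small $C^{\infty}$ disk $S$ through $p$ transverse to $\bol{B}$, carrying coordinates $\lr{a,b}$, and define $\psi,\theta$ to be constant along the integral curves of $\bol{B}$ with initial data $\psi\rvert_{S}=a$, $\theta\rvert_{S}=b$. The flow-box (straightening) theorem guarantees that $\psi,\theta$ are smooth and that $\gr\psi,\gr\theta$ stay independent near $p$. By construction $\bol{B}\cdot\gr\psi=\bol{B}\cdot\gr\theta=0$, so $\bol{B}$ is parallel to $\gr\psi\cp\gr\theta$, and I may write $\bol{B}=\lambda\,\gr\psi\cp\gr\theta$ with $\lambda$ a smooth nonvanishing scalar near $p$.

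The hard part is to eliminate the factor $\lambda$, and this is exactly where solenoidality is essential. Since $\di\lr{\gr\psi\cp\gr\theta}=0$ identically, $0=\di\bol{B}=\gr\lambda\cdot\lr{\gr\psi\cp\gr\theta}=\lambda^{-1}\,\bol{B}\cdot\gr\lambda$, so $\lambda$ is constant along field lines; as $\psi,\theta$ label those lines, $\lambda=\Lambda\lr{\psi,\theta}$. I then rescale, keeping $\psi$ and replacing $\theta$ by $\tilde{\theta}=\int_{0}^{\theta}\Lambda\lr{\psi,s}\,ds$, whose Jacobian with respect to $\lr{\psi,\theta}$ equals $\Lambda$; consequently $\gr\psi\cp\gr\tilde{\theta}=\Lambda\,\gr\psi\cp\gr\theta=\bol{B}$. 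Renaming $\tilde{\theta}\mapsto\theta$ yields $\cu\hb{w}=\gr\psi\cp\gr\theta$. Finally I adjoin a third coordinate $\ell$ with $\bol{B}\cdot\gr\ell\neq0$ (for instance the flow time measured from $S$), so that $\gr\ell\cdot\lr{\gr\psi\cp\gr\theta}=\bol{B}\cdot\gr\ell\neq0$ and $\lr{\ell,\psi,\theta}$ is a genuine $C^{\infty}$ chart on a neighborhood $U_{p}$ of $p$.

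To conclude, I would shrink each $U_{p}$ so that $\ov{U}_{p}$ remains inside the region where the construction is valid and the coordinates extend smoothly up to $\ov{U}_{p}$. The collection $\left\{U_{p}\right\}_{p\in\ov{\Omega}}$ is then an open cover of the compact set $\ov{\Omega}$, so a finite subcover $U_{1},\dots,U_{\alpha}$ exists, furnishing the claimed charts $\lr{\ell_{i},\psi_{i},\theta_{i}}\in C^{\infty}\lr{\ov{U}_{i}}$ with $\cu\hb{w}=\gr\psi_{i}\cp\gr\theta_{i}$ on each $\ov{U}_{i}$. The only genuinely delicate point is the normalization of $\lambda$; everything else is the nonvanishing identity above and a routine compactness argument.
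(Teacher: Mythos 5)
Your proof is correct and arrives at the same normal form as the paper, but by a genuinely more self-contained route. The paper observes that $\eta=d\lr{\hat{w}^{i}dx^{i}}$ is a closed 2-form of constant rank $2$ (deducing $\cu\hb{w}\neq\bol{0}$ from $\abs{h}\geq\delta$ via a chain of bounds on $w$ and $\abs{\cu\bol{w}}$) and then simply \emph{cites} Darboux's theorem to obtain $\eta=d\psi_{i}\w d\theta_{i}$, taking $\ell_{i}$ to be arc length along the integral curves of $\cu\hb{w}$. You instead prove the required rank-2 Darboux/Clebsch normal form from scratch: your identity $\hb{w}\cdot\lr{\cu\hb{w}}=h/w^{2}$ gives the non-vanishing of $\bol{B}=\cu\hb{w}$ more directly and cleanly than the paper's angle-and-bounds argument, and the flow-box construction followed by the solenoidality step ($\lambda$ constant along field lines, then absorbed by rescaling $\theta$) is exactly the standard proof of the theorem the paper invokes. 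What you gain is transparency and independence from the cited reference; what the paper gains is brevity. One point worth flagging: the lemma as stated is satisfied by any third coordinate $\ell_{i}$, so your flow-time choice is fine here, but the paper deliberately takes $\ell_{i}$ to be the arc-length parameter of the curves tangent to $\cu\hb{w}$, because Lemma 5.2 later uses $\gr\ell_{i}\cdot\lr{\cu\hb{w}}=\abs{\cu\hb{w}}$; with flow time instead you would get a different (still nonzero) normalization and would have to adjust the constants downstream.
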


\begin{proof}
Observe that $\epsilon=\inf_{\ov{\Omega}}\abs{h}\geq\delta$.
Since $\bol{w}$ is smooth over the closed bounded interval $\ov{\Omega}$, in such interval $w=\abs{\bol{w}}\leq M$ and $\abs{\nabla\cp\bol{w}}\leq N$ for some positive real constants $M$ and $N$. If $\phi$ is the angle between $\bol{w}$ and $\nabla\cp\bol{w}$, one has $\abs{h}=w\abs{\nabla\cp\bol{w}}\abs{\cos{\phi}}\geq\epsilon$. Therefore $w\abs{\nabla\cp\bol{w}}\geq \epsilon$, $w\geq\epsilon/N>0$, and $\abs{\nabla\cp\bol{w}}\geq \epsilon/M>0$ in $\ov{\Omega}$. 
Next observe that, since $w\neq 0$, the normalized vector field $\hb{w}=\bol{w}/w$ is well defined in $\ov{\Omega}$. In addition, the helicity $\hat{h}=\JI{\hb{w}}$ of the normalized vector field $\hb{w}$ satisfies $\lvert\hat{h}\rvert=w^{-2}\abs{h}\geq M^{-2}~\epsilon>0$ in $\ov{\Omega}$. 

Define the 1-form $\omega=\hat{w}^{i} dx^{i}$. Then, the 2-form $\eta=d\omega=\lr{\nabla\cp\hb{w}}^{i} \ast dx^{i}$ is closed with a constant rank $2$, because $h\neq 0$ implies $\nabla\cp\hb{w}\neq \bol{0}$ in $\tilde{\Omega}$. Then, Darboux's theorem (see Refs. \cite{DeLeon_3,Arnold_4}) guarantees that, for every $\bol{x}_{i}\in\tilde{\Omega}$, there exists a neighborhood $U_{i}\subset\tilde{\Omega}$ of $\bol{x}_{i}$ such that
\begin{equation}
\eta=d\psi_{i}\w d\theta_{i}~~{\rm{in}}~~\ov{U}_{i},
\end{equation} 
where $\psi_{i}\in C^{\infty}\lr{\ov{U}_{i}}$ and $\theta_{i}\in C^{\infty}\lr{\ov{U}_{i}}$ span local coordinates in $U_{i}$. Hence $\nabla\cp\hb{w}=\nabla\psi_{i}\cp\nabla\theta_{i}$ in $\ov{U}_{i}$. 
Furthermore $\cup_{i}U_{i}\supset\ov{\Omega}$. Since $\ov{\Omega}$ is compact, there exists a finite subcover $\left\{U_{i}\right\}=\left\{U_{1},...,U_{\alpha}\right\}$ of $\ov{\Omega}$ such that $\cup_{i=1}^{\alpha}U_{i}\supset\ov{\Omega}$ for some $\alpha\in\mathbb{N}$. 
The intersection of the level sets $\psi_{i}=\rm{constant}$ and $\theta_{i}=\rm{constant}$ defines a curve $\Gamma_{i}\subset \ov{U}_{i}$ whose tangent vector is
\begin{equation}
\p_{\ell_{i}}=\frac{\p\bol{x}}{\p\ell_{i}}=\frac{\nabla\psi_{i}\cp\nabla\theta_{i}}{\abs{\nabla\psi_{i}\cp\nabla\theta_{i}}}=\frac{\nabla\cp\hb{w}}{\abs{\nabla\cp\hb{w}}},
\end{equation}
where $\ell_{i}$ is a parameter measuring the length of the curve $\Gamma_{i}$.
The parameter $\ell_{i}$ can be taken as a local smooth coordinate in $\ov{U}_{i}$.
The Jacobian of the coordinate change $\lr{x,y,z}\mapsto\lr{\ell_{i},\psi_{i},\theta_{i}}$ is
\begin{equation}
\nabla{\ell_{i}}\cdot\nabla\psi_{i}\cp\nabla\theta_{i}=\lr{\nabla\ell_{i}\cdot\p_{\ell_{i}}}\abs{\nabla\psi_{i}\cp\nabla\theta_{i}}=\abs{\nabla\cp\hb{w}}\geq \lvert\hat{h}\rvert>0.
\end{equation} 
This completes the proof of lemma.
\end{proof}

\begin{lemma}\label{LM2}
Let $\bol{w}\in C^{\infty}(\tilde{\Omega})$ be a smooth vector field in a bounded domain $\tilde{\Omega}\subset\mathbb{R}^3$ satisfying $\bol{n}\cdot\bol{w}=0$ on $\p\Omega$. 
Assume $\abs{h}=\abs{\JI{w}}\geq \delta>0$ in $\tilde{\Omega}$. Then there exists a vector field $\bol{a}_{\perp}\in C^{\infty}\lr{\ov{\Omega}}$ with the following properties in $\Omega$:
\begin{subequations}\label{aperp}
\begin{align}
\bol{a}_{\perp}\cdot\bol{w}&=0,\\
\nabla\cdot\bol{a}_{\perp}=\abs{\nabla\cp\hb{w}}&\geq M^{-2}\epsilon>0,\\
\abs{\bol{a}_{\perp}}&\leq \nu,
\end{align}
\end{subequations}
where $\epsilon=\inf_{\ov{\Omega}}{\abs{h}}$, $M=\sup_{\ov{\Omega}}\abs{\bol{w}}$, and $\nu=\sup_{\ov{\Omega}}\abs{\bol{a}_{\perp}}$.
\end{lemma}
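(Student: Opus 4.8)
The plan is to build $\bol{a}_{\perp}$ first locally on each chart supplied by Lemma \ref{LM1} and then to patch the pieces together. On the chart $U_{i}$, with the coordinate $\ell_{i}$ whose tangent satisfies $\p_{\ell_{i}}=\lr{\cu\hb{w}}/\abs{\cu\hb{w}}$ and $\nabla\ell_{i}\cdot\lr{\cu\hb{w}}=\abs{\cu\hb{w}}$, I would set $\bol{a}_{\perp}^{(i)}=\hb{w}\cp\nabla\ell_{i}$. This candidate meets all three requirements on $U_{i}$ at once: it is perpendicular, since $\bol{a}_{\perp}^{(i)}\cdot\bol{w}=w\,\hb{w}\cdot\lr{\hb{w}\cp\nabla\ell_{i}}=0$; its divergence is correct, because the identity $\di\lr{\hb{w}\cp\nabla\ell_{i}}=\nabla\ell_{i}\cdot\lr{\cu\hb{w}}-\hb{w}\cdot\lr{\cu\nabla\ell_{i}}$ together with $\cu\nabla\ell_{i}=\bol{0}$ and the Jacobian identity of Lemma \ref{LM1} gives exactly $\di\bol{a}_{\perp}^{(i)}=\abs{\cu\hb{w}}$; and it is bounded, since $\abs{\bol{a}_{\perp}^{(i)}}\leq\abs{\nabla\ell_{i}}$ is continuous on the compact set $\ov{U}_{i}$.

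To globalize, take a smooth partition of unity $\left\{\zeta_{i}\right\}_{i=1}^{\alpha}$ subordinate to the finite cover $\left\{U_{i}\right\}$ of Lemma \ref{LM1} and form $\bol{a}_{\perp}=\sum_{i}\zeta_{i}\bol{a}_{\perp}^{(i)}$. Smoothness on $\ov{\Omega}$ is immediate, and two of the three properties survive the convex combination verbatim: perpendicularity holds because every summand is perpendicular, $\bol{a}_{\perp}\cdot\bol{w}=\sum_{i}\zeta_{i}\,\bol{a}_{\perp}^{(i)}\cdot\bol{w}=0$, and the bound holds because $\abs{\bol{a}_{\perp}}\leq\sum_{i}\zeta_{i}\sup_{i}\abs{\bol{a}_{\perp}^{(i)}}=:\nu<\infty$. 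The lower bound $\abs{\cu\hb{w}}\geq M^{-2}\epsilon$ on the divergence is then inherited from Lemma \ref{LM1} once the divergence is shown to equal $\abs{\cu\hb{w}}$.

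The divergence is where the work lies, and I expect it to be the main obstacle. Differentiating the convex combination gives $\di\bol{a}_{\perp}=\abs{\cu\hb{w}}\sum_{i}\zeta_{i}+\sum_{i}\nabla\zeta_{i}\cdot\bol{a}_{\perp}^{(i)}=\abs{\cu\hb{w}}+\sum_{i}\nabla\zeta_{i}\cdot\bol{a}_{\perp}^{(i)}$, so the partition of unity contaminates the answer with the cross term $S=\sum_{i}\nabla\zeta_{i}\cdot\bol{a}_{\perp}^{(i)}$, which does not vanish in general. To recover the exact equality I would not patch the fields naively but instead reconcile the local potentials: on an overlap $U_{i}\cap U_{j}$ one has $\bol{a}_{\perp}^{(i)}-\bol{a}_{\perp}^{(j)}=\hb{w}\cp\nabla\lr{\ell_{i}-\ell_{j}}$ with $\ell_{i}-\ell_{j}$ constant along the integral curves of $\cu\hb{w}$ (both being arc length with the orientation fixed by $\cu\hb{w}$). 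Redefining each $\ell_{i}$ by a function constant along these curves so that the local fields agree on overlaps makes every cross term disappear and leaves $\di\bol{a}_{\perp}=\abs{\cu\hb{w}}$ exactly; equivalently, one corrects $\bol{a}_{\perp}$ by a perpendicular field $\hb{w}\cp\nabla\mu$ chosen so that $\nabla\mu\cdot\lr{\cu\hb{w}}=-S$, i.e.\ so that $\mu$ solves a transport equation along the curves $\Gamma_{i}$. The delicate point is the global solvability of this matching: when the integral curves of $\cu\hb{w}$ close up, the arc-length origins carry a monodromy and the transport equation acquires a closed-orbit compatibility condition. I would discharge this using the finite cover and the compactness of $\ov{\Omega}$ from Lemma \ref{LM1}, choosing a common transversal on each overlap and gluing inductively over $U_{1},\dots,U_{\alpha}$ so that the finitely many matching conditions are met simultaneously.
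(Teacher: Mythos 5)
Your proposal is correct and follows essentially the same route as the paper: the local candidate $\hb{w}\cp\nabla\ell_{i}$ with $\di\bol{a}_{\perp}^{(i)}=\nabla\ell_{i}\cdot\lr{\cu\hb{w}}=\abs{\cu\hb{w}}$ is exactly the paper's construction, and your rejection of the naive partition of unity in favour of reconciling the potentials $\ell_{i}$ on overlaps (where $\ell_{i}-\ell_{j}$ depends only on the transverse coordinates $\psi,\theta$) is precisely how the paper proceeds. The inductive matching you defer to at the end is carried out in the paper by extending each transition function $\sigma_{ij}\lr{\psi_{j},\theta_{j}}$ to the whole chart via the Whitney--McShane extension theorem and gluing chart by chart over the finite cover; the closed-orbit monodromy you flag is a real subtlety that the paper's proof does not explicitly address either.
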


\begin{proof}
Let $\left\{U_{i}\right\}$ and $\lr{\ell_{i},\psi_{i},\theta_{i}}$, $i=1,...,\alpha$, 
be the finite open cover of $\ov{\Omega}$ and the local coordinate systems obtained in lemma \ref{LM1}. 
We denote by $\left\{V_{i}\right\}=\left\{V_{1},...,V_{\alpha}\right\}$ the adjustment of $\left\{U_{i}\right\}$ with open sets $V_{i}\subseteq U_{i}$ such that $\cup_{i=1}^{\alpha}V_{i}=\ov{\Omega}$, the intersections $V_{i}\cap V_{j}$ are either empty or of finite measure, and there is no set $V_{i}$ such that $V_{i}\subseteq V_{j}$ for some $V_{j}$.
This adjustment is always possible since the intersection of a finite number of open sets in a metric space is always open, and the open sets $U_{i}$ have non empty intersections. The vector field
\begin{equation}
\bol{a}_{\perp 1}=\hb{w}\cp\nabla\ell_{1}~~~~\rm{in}~~~~\ov{V}_{1}
\end{equation}
has the following properties in $\ov{V}_{1}$:
\begin{subequations}
\begin{align}
\bol{a}_{\perp 1}\cdot\bol{w}&=0,\\
\nabla\cdot\bol{a}_{\perp 1}=\nabla\ell_{1}\cdot\nabla\cp\hb{w}=\abs{\nabla\cp\hb{w}}
&\geq \lvert\hat{h}\rvert\geq M^{-2}\epsilon>0,\\
\abs{\bol{a}_{\perp 1}}&\leq\abs{\nabla\ell_{1}}\leq\nu_{1}.
\end{align}
\end{subequations}
To derive the last equation, we used the fact that $\ell_{1}$ is smooth in the closed interval $\ov{V}_{1}$ 
and, therefore, $\nabla\ell_{1}$ is bounded in $\ov{V}_{1}$ by a positive real constant $\nu_{1}=\sup_{\ov{V}_{1}}\abs{\nabla\ell_{1}}$.

The next step is to extend the function $\ell_{1}$ to the whole $\ov{\Omega}$.
By construction there exists a neighborhood $V_{2}$ that has a finite measure intersection with $V_{1}$. Let $\lr{\ell_{2},\psi_{2},\theta_{2}}$ be the local set of coordinates in $V_{2}$. 
In $\ov{V}_{1}\cap \ov{V}_{2}$, $\nabla\ell_{1}\cdot\nabla\cp\hb{w}=\nabla\ell_{2}\cdot\nabla\cp\hb{w}=\abs{\nabla\cp\hb{w}}$ and $\nabla\cp\hb{w}=\nabla\psi_{1}\cp\nabla\theta_{1}=\nabla\psi_{2}\cp\nabla\theta_{2}$. Hence, $\nabla\lr{\ell_{1}-\ell_{2}}\cdot\nabla\psi_{1}\cp\nabla\theta_{1}=\nabla\lr{\ell_{1}-\ell_{2}}\cdot\nabla\psi_{2}\cp\nabla\theta_{2}=0$. The coordinates $\ell_{1}$ and $\ell_{2}$ differ up to a function $\sigma_{12}\lr{\psi_{1},\theta_{1}}=\sigma_{12}\lr{\psi_{2},\theta_{2}}$ of $\psi_{1}$ and $\theta_{1}$ (or $\psi_{2}$ and $\theta_{2}$), i.e.,
\begin{equation}
\ell_{1}-\ell_{2}=\sigma_{12}\lr{\psi_{1},\theta_{1}}=\sigma_{12}\lr{\psi_{2},\theta_{2}}.
\end{equation}
Since $\ell_{1}$ and $\ell_{2}$ are smooth, the function $\sigma_{12}$ is also smooth. 
By the Whitney extension theorem (see Refs. \cite{Shane, Whitney}), $\sigma_{12}$ can be extended to the whole $\ov{V}_{2}$ as a smooth function. Furthermore, since $\sigma_{12}$ does not depend on $\ell_{2}$, such extension can be performed
in the 2-dimensional space of $\lr{\psi_{2},\theta_{2}}$ so that the extended function $\sigma^{\ast}_{12}\lr{\psi_{2},\theta_{2}}$ can be made independent of $\ell_{2}$. 

Next, we define
\begin{subequations}
\begin{align}
\mc{L}_{2}&=\ell_{1}~~~~\rm{in}~~~~\ov{V}_{1}-\ov{V}_{1}\cap \ov{V}_{2},\\
\mc{L}_{2}&=\ell_{2}+\sigma^{\ast}_{12}~~~~\rm{in}~~~~\ov{V}_{2}.
\end{align}
\end{subequations}
By the construction, $\mc{L}_{2}$ is smooth in $\ov{V}_{1}\cup \ov{V}_{2}$.
Consider the vector field 
\begin{equation}
\bol{a}_{\perp 2}=\hb{w}\cp\nabla\mc{L}_{2}~~~~\rm{in}~~~~\ov{V}_{1}\cup \ov{V}_{2}.
\end{equation} 
$\bol{a}_{\perp 2}$ has the following properties in $\ov{V}_{1}\cup \ov{V}_{2}$:
\begin{subequations}\label{a2}
\begin{align}
\bol{a}_{\perp 2}\cdot\bol{w}&=0,\\
\nabla\cdot\bol{a}_{\perp 2}=\abs{\nabla\cp\hb{w}}
&\geq \lvert\hat{h}\rvert\geq M^{-2}\epsilon>0,\\
\abs{\bol{a}_{\perp 2}}&\leq\abs{\nabla\mc{L}_{2}}\leq\nu_{2}.
\end{align}
\end{subequations}
In the last equation we used the fact that $\mc{L}_{2}$ is smooth in the closed interval $\ov{V}_{1}\cup\ov{V}_{2}$ 
and therefore $\nabla\mc{L}_{2}$ is bounded in $\ov{V}_{1}\cup\ov{V}_{2}$ by a positive real constant $\nu_{2}=\sup_{\ov{V}_{1}\cup\ov{V}_{2}}\abs{\nabla\mc{L}_{2}}$.

The procedure above can be repeated to further extend the domain of the function $\mc{L}_{2}$ by adding one by one the neighboring domains ${V_{3},...,V_{\alpha}}$. 
Notice that, at each step $i$, the difference $\sigma_{i,i+1}=\mc{L}_{i}-\ell_{i+1}$ will be a function of $\lr{\psi_{i+1},\theta_{i+1}}$. 
Finally, we obtain a smooth function $\mc{L}_{\alpha}$ defined in the whole $\ov{\Omega}$.
The vector field ${\bol{a}}_{\perp\alpha}=\hb{w}\cp\nabla{\mc{L}_{\alpha}}$ inherits the properties \eqref{a2} in the whole $\ov\Omega$:
\begin{subequations}\label{aalpha}
\begin{align}
\bol{a}_{\perp \alpha}\cdot\bol{w}&=0,\\
\nabla\cdot\bol{a}_{\perp \alpha}=\abs{\nabla\cp\hb{w}}
&\geq \lvert\hat{h}\rvert\geq M^{-2}\epsilon>0,\\
\abs{\bol{a}_{\perp \alpha}}&\leq\abs{\nabla\mc{L}_{\alpha}}\leq\nu.
\end{align}
\end{subequations}
The proof is completed by setting $\bol{a}_{\perp}=\bol{a}_{\perp\alpha}$.
\end{proof}

\begin{theorem}\label{PI}
Let $\bol{w}\in C^{\infty}(\tilde{\Omega})$ be a smooth vector field in a bounded domain $\tilde{\Omega}\subset\mathbb{R}^3$ satisfying $\bol{n}\cdot\bol{w}=0$ on $\p\Omega$. 
Assume $\abs{h}=\abs{\JI{w}}\geq \delta>0$ in $\tilde{\Omega}$. Then the following estimate holds:
\begin{equation}
\Norm{\nabla_{\perp} u}\geq \frac{\epsilon}{2M^2\nu}\Norm{u}\,\,\,\,\,\,\,\,\,\forall u\in {\mc{H}}^{\perp}_{0}\lr{\Omega},
\end{equation}
where $\epsilon=\inf_{\ov{\Omega}}{\abs{h}}$, $M=\sup_{\ov{\Omega}}\abs{\bol{w}}$, and $\nu=\sup_{\ov{\Omega}}\abs{\bol{a}_{\perp}}$ with $\bol{a}_{\perp}$ given in lemma \ref{LM2}.
\end{theorem}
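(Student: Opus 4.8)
The plan is to exploit the auxiliary field $\bol{a}_{\perp}$ furnished by Lemma \ref{LM2} in a divergence-based Poincar\'e argument, the same device that yields the classical Poincar\'e inequality from a vector field of positive divergence. First I would establish, for every $v\in C^{1}\lr{\ov{\Omega}}$, the integration-by-parts identity
\begin{equation}
\int_{\Omega}v^{2}\,\lr{\di\bol{a}_{\perp}}\,dV=\int_{\p\Omega}v^{2}\,\lr{\bol{a}_{\perp}\cdot\bol{n}}\,dS-2\int_{\Omega}v\,\lr{\bol{a}_{\perp}\cdot\gr v}\,dV.
\end{equation}
The \emph{decisive} observation is that, because $\bol{a}_{\perp}\cdot\bol{w}=0$ by \eqref{aperp}, the field $\bol{a}_{\perp}$ annihilates the parallel part $\nabla_{\parallel}v$ of the gradient, so $\bol{a}_{\perp}\cdot\gr v=\bol{a}_{\perp}\cdot\nabla_{\perp}v$. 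This is exactly what is needed: in $\mc{H}^{\perp}$ one only controls $\nabla_{\perp}v$ and never the full gradient, so the replacement of $\gr v$ by $\nabla_{\perp}v$ in the cross term is the step that makes the estimate accessible at all. It is precisely for this reason that $\bol{a}_{\perp}$ was constructed perpendicular to $\bol{w}$.

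Next I would pass to $u\in\mc{H}^{\perp}_{0}\lr{\Omega}$ by density. Choosing a sequence $u_{m}\in C^{1}\lr{\ov{\Omega}}$ with $u_{m}\to u$ in $\mc{H}^{\perp}\lr{\Omega}$, the trace lemma gives $Tu_{m}\to Tu=0$ in $L^{2}\lr{\p\Omega}$. Since $\di\bol{a}_{\perp}=\abs{\cu\hb{w}}$ and $\bol{a}_{\perp}\cdot\bol{n}$ are bounded on the compact set $\ov{\Omega}$ (by $\bol{a}_{\perp}\in C^{\infty}\lr{\ov{\Omega}}$), and since $u_{m}\to u$ in $L^{2}$ forces $u_{m}^{2}\to u^{2}$ in $L^{1}$ while $\nabla_{\perp}u_{m}\to\nabla_{\perp}u$ in $L^{2}$, every term of the identity converges; in particular the boundary term is bounded by $\nu\,\Norm{Tu_{m}}^{2}_{L^{2}\lr{\p\Omega}}\to 0$. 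This would yield the limiting identity
\begin{equation}
\int_{\Omega}u^{2}\,\lr{\di\bol{a}_{\perp}}\,dV=-2\int_{\Omega}u\,\lr{\bol{a}_{\perp}\cdot\nabla_{\perp}u}\,dV.
\end{equation}

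From here the estimate would follow routinely. The divergence bound $\di\bol{a}_{\perp}\geq M^{-2}\epsilon$ controls the left-hand side from below by $M^{-2}\epsilon\,\Norm{u}^{2}$, while on the right the bound $\abs{\bol{a}_{\perp}}\leq\nu$ together with Cauchy--Schwarz gives
\begin{equation}
2\abs{\int_{\Omega}u\,\lr{\bol{a}_{\perp}\cdot\nabla_{\perp}u}\,dV}\leq 2\nu\int_{\Omega}\abs{u}\,\abs{\nabla_{\perp}u}\,dV\leq 2\nu\,\Norm{u}\,\Norm{\nabla_{\perp}u}.
\end{equation}
Combining the two bounds and dividing by $\Norm{u}$ (the case $u=0$ being trivial) produces $M^{-2}\epsilon\,\Norm{u}\leq 2\nu\,\Norm{\nabla_{\perp}u}$, which is the claimed inequality with constant $\epsilon/\lr{2M^{2}\nu}$.

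I expect the main obstacle to be the limiting argument rather than the algebra. One must verify that the approximating sequence can indeed be taken in $C^{1}\lr{\ov{\Omega}}$ (so that the boundary integral is meaningful and the trace lemma applies) and that the boundary term genuinely vanishes in the limit. Here the zero-trace hypothesis is essential, because $\bol{a}_{\perp}\cdot\bol{n}$ need \emph{not} vanish on $\p\Omega$: both $\bol{a}_{\perp}$ and $\bol{n}$ are merely orthogonal to $\bol{w}$, hence generally independent within the two-dimensional plane transverse to $\bol{w}$, so one cannot kill the boundary term geometrically and must rely on $Tu=0$ instead. The remaining points—uniform boundedness of $\di\bol{a}_{\perp}$ and $\bol{a}_{\perp}\cdot\bol{n}$—are immediate from smoothness of $\bol{a}_{\perp}$ on the compact set $\ov{\Omega}$.
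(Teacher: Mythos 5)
Your proposal is correct and follows essentially the same route as the paper: both rest on the auxiliary field $\bol{a}_{\perp}$ of Lemma \ref{LM2}, the identity $\int_{\Omega}u^{2}\,\nabla\cdot\bol{a}_{\perp}\,dV=-2\int_{\Omega}u\,\bol{a}_{\perp}\cdot\nabla_{\perp}u\,dV$, and a Cauchy--Schwarz estimate against the bounds $\nabla\cdot\bol{a}_{\perp}\geq M^{-2}\epsilon$ and $\abs{\bol{a}_{\perp}}\leq\nu$. The only difference is in the packaging: you justify the key identity by classical integration by parts on $C^{1}\lr{\ov{\Omega}}$ approximants, killing the boundary term via the trace operator, whereas the paper phrases the same computation as a distributional divergence tested against $\psi=u$.
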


\begin{proof}
For $u\in\mc{H}^{\perp}_{0}\lr{\Omega}$ let us evaluate the integral
\begin{equation}
\mc{I}=\int_{\Omega}{\nabla\cdot\lr{u^2\bol{a}_{\perp}}}\,dV,
\end{equation}
where $\bol{a}_{\perp}$ is given by \eqref{aalpha}.
Since $u$ is not differentiable, the divergence appearing in the integral must be evaluated in the distribution sense. In analogy to the definition given in Ref. \cite{Pigola}, the \textit{distributional divergence} $\nabla\cdot\mf{A}_{\perp}$ of a vector field $\mf{A}_{\perp}\in L^{2}\lr{\Omega}$ such that $\mf{A}_{\perp}\cdot\bol{w}=0$ in $\Omega$ is defined by
\begin{equation}
\lr{\psi, \nabla\cdot\mf{A}_{\perp}}=-\int_{\Omega}{\mf{A}_{\perp}\cdot\nabla_{\perp}\psi~}\,dV,~~~~\forall\psi\in \mc{H}_{0}^{\perp}\lr{\Omega}.
\end{equation}
By the definition,
\begin{dmath}
\lr{\psi,\nabla\cdot\lr{u^2\bol{a}_{\perp}}}=-\int_{\Omega}{u^2\bol{a}_{\perp}\cdot\nabla_{\perp}\psi}\,dV=
-\int_{\Omega}{\bol{a}_{\perp}\cdot\left[{\nabla_{\perp}\lr{u^2\psi}-2\,\psi\, u\,\nabla_{\perp}u}\right]}\,dV=
\lr{u^2\psi,\nabla\cdot\bol{a}_{\perp}}+2\int_{\Omega}{\psi\,u\,\nabla_{\perp}u\cdot\bol{a}_{\perp}}\,dV=
\lr{\psi,u^{2}\nabla\cdot\bol{a}_{\perp}+2\,u\,\nabla_{\perp}u\cdot\bol{a}_{\perp}}.\label{eq3_3}
\end{dmath}
Hence, $\nabla\cdot\lr{u^2\bol{a}_{\perp}}=u^2\nabla\cdot\bol{a}_{\perp}+2\,u\,\nabla_{\perp}u\cdot\bol{a}_{\perp}$ in the sense of distributions. Similarly, we may calculate $\nabla\cdot\lr{u\,\bol{a}_{\perp}}=u\,\nabla\cdot\bol{a}_{\perp}+\nabla_{\perp}u\cdot\bol{a}_{\perp}$. 
The next step is to show that $\mc{I}=0$. To see this observe that
\begin{equation}
\lr{\psi,\nabla\cdot\lr{u\,\bol{a}_{\perp}}}+\int_{\Omega}{u\,\bol{a}_{\perp}\cdot\nabla_{\perp}\psi}\,dV=0.
\end{equation}
For $\psi=u$, this evaluates as
\begin{equation}
\int_{\Omega}{\left[u\,\nabla\cdot\lr{u\,\bol{a}_{\perp}}+u\,\nabla_{\perp}u\,\cdot\bol{a}_{\perp}\right]}\,dV=\int_{\Omega}{\left[u^2\nabla\cdot\bol{a}_{\perp}+2\,u\,\nabla_{\perp}u\cdot\bol{a}_{\perp}\right]}\,dV=0.\label{eq3_5}
\end{equation}
Recalling equation \eqref{eq3_3}, we obtain $\mc{I}=0$. 
Finally, using this result with $\abs{\bol{a}_{\perp}}\leq \nu$ and $\nabla\cdot\bol{a}_{\perp}\geq M^{-2}\epsilon$, we obtain
\begin{equation}
2\Norm{u}\Norm{\nabla_{\perp}u}\nu\geq 2\abs{\int_{\Omega}{u\,\nabla_{\perp}u\cdot\bol{a}_{\perp}}\,dV}=
\abs{\int_{\Omega}u^2\nabla\cdot\bol{a}_{\perp}\,dV}\geq M^{-2}\epsilon\Norm{u}^2,
\end{equation}
which proves the theorem.
\end{proof}

\begin{corollary} \label{cor1}
Let $\bol{w}\in C^{\infty}\lr{\ov{\Omega}}$ be a smooth non-vanishing vector field in a smoothly bounded connected domain 
$\Omega\subset\mathbb{R}^{3}$ such that $\bol{n}\cdot\bol{w}=0$ on $\p\Omega$. Let $\bol{a}_{\perp}\in C^{1}\lr{\Omega}$ be a vector field with the following properties in $\Omega$:
\begin{subequations}
\begin{align}
\bol{a}_{\perp}\cdot\bol{w}&=0,\\
\inf_{\Omega}\abs{\nabla\cdot\bol{a}_{\perp}}&=\epsilon>0,\\
\sup_{\Omega}\abs{\bol{a}_{\perp}}&=\nu,
\end{align}
\end{subequations}
Then
\begin{equation}
\Norm{\nabla_{\perp} u}\geq \frac{\epsilon}{2\nu}\Norm{u}\,\,\,\,\,\,\,\,\,\forall u\in {\mc{H}}^{\perp}_{0}\lr{\Omega}.
\end{equation}  
\end{corollary}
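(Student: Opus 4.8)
The plan is to recognize this corollary as an abstraction of Theorem~\ref{PI}: the concrete field furnished by Lemma~\ref{LM2} is replaced by \emph{any} $\bol{a}_{\perp}\in C^{1}\lr{\Omega}$ satisfying the three listed properties, and the Poincar\'e constant is correspondingly expressed through $\epsilon=\inf_{\Omega}\abs{\di\bol{a}_{\perp}}$ and $\nu=\sup_{\Omega}\abs{\bol{a}_{\perp}}$ rather than through $\abs{\cu\hb{w}}$. I would therefore replay the argument of Theorem~\ref{PI} almost verbatim, inserting one extra step to accommodate the fact that here $\di\bol{a}_{\perp}$ is no longer manifestly sign-definite.

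First I would set up the same distributional framework. Because $\bol{a}_{\perp}\cdot\bol{w}=0$ and $\abs{\bol{a}_{\perp}}\leq\nu$, for $u\in\mc{H}^{\perp}_{0}\lr{\Omega}$ the field $u\,\bol{a}_{\perp}$ lies in $L^{2}\lr{\Omega}$ and is orthogonal to $\bol{w}$, so its distributional divergence is defined exactly as in the proof of Theorem~\ref{PI}. The same computation then yields the product rule $\di\lr{u\,\bol{a}_{\perp}}=u\,\di\bol{a}_{\perp}+\nabla_{\perp}u\cdot\bol{a}_{\perp}$ in the sense of distributions. Testing the defining identity of the distributional divergence against $\psi=u$ reproduces \eqref{eq3_5}, namely
\begin{equation}
\int_{\Omega}\lr{u^{2}\,\di\bol{a}_{\perp}+2\,u\,\nabla_{\perp}u\cdot\bol{a}_{\perp}}\,dV=0 .
\end{equation}
None of these manipulations used the explicit form of $\bol{a}_{\perp}$, so they transfer unchanged; the only hypotheses invoked are $\bol{a}_{\perp}\cdot\bol{w}=0$, $\bol{a}_{\perp}\in C^{1}\lr{\Omega}$, and the zero trace of $u$.

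From this identity I would isolate $\int_{\Omega}u^{2}\,\di\bol{a}_{\perp}\,dV=-2\int_{\Omega}u\,\nabla_{\perp}u\cdot\bol{a}_{\perp}\,dV$ and bound the right-hand side by Cauchy--Schwarz together with $\abs{\bol{a}_{\perp}}\leq\nu$, giving $\abs{\int_{\Omega}u^{2}\,\di\bol{a}_{\perp}\,dV}\leq 2\nu\,\Norm{u}\,\Norm{\nabla_{\perp}u}$. The step that is genuinely new relative to Theorem~\ref{PI}---and the only place I expect any friction---is the lower bound on the left-hand side. In Theorem~\ref{PI} one had $\di\bol{a}_{\perp}=\abs{\cu\hb{w}}\geq 0$, so the pointwise bound passed directly under the integral; here only $\abs{\di\bol{a}_{\perp}}\geq\epsilon$ is assumed. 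However, $\bol{a}_{\perp}\in C^{1}\lr{\Omega}$ makes $\di\bol{a}_{\perp}$ continuous, and $\abs{\di\bol{a}_{\perp}}\geq\epsilon>0$ on the connected set $\Omega$ forbids any sign change, so $\di\bol{a}_{\perp}$ keeps a fixed sign throughout $\Omega$. Since $u^{2}\geq0$, this yields $\abs{\int_{\Omega}u^{2}\,\di\bol{a}_{\perp}\,dV}\geq\epsilon\int_{\Omega}u^{2}\,dV=\epsilon\Norm{u}^{2}$. Combining the two bounds gives $\epsilon\Norm{u}^{2}\leq 2\nu\,\Norm{u}\,\Norm{\nabla_{\perp}u}$, and dividing by $\Norm{u}$ (the case $u=0$ being trivial) delivers $\Norm{\nabla_{\perp}u}\geq\frac{\epsilon}{2\nu}\Norm{u}$, as claimed.
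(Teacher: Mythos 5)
Your proposal is correct and follows essentially the same route as the paper, which simply states that the corollary ``follows from the proof of Theorem \ref{PI}'': you replay that argument with the abstract $\bol{a}_{\perp}$ in place of the one constructed in Lemma \ref{LM2}. Your extra observation --- that continuity of $\di\bol{a}_{\perp}$ on the connected domain $\Omega$ together with $\abs{\di\bol{a}_{\perp}}\geq\epsilon>0$ forces a fixed sign, so the pointwise lower bound can be pulled under the integral --- is a detail the paper leaves implicit, and it is exactly the right way to close that small gap.
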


\begin{proof}
This result follows from the proof of theorem \ref{PI}. 
\end{proof}

\noindent The following statement clarifies the geometrical meaning carried by the vector field $\bol{a}_{\perp}$.

\begin{proposition}\label{prop3.2}
Let $\bol{w}\in C^{\infty}\lr{\ov{\Omega}}$ be a smooth non-vanishing vector field in a smoothly bounded connected domain $\Omega\subset\mathbb{R}^{3}$ such that $\bol{n}\cdot\bol{w}=0$ on $\p\Omega$.
Assume that there exists a vector field $\bol{a}_{\perp}\in C^{1}\lr{\Omega}$ such that $\bol{a}_{\perp}\cdot\bol{w}=0$ and $\abs{\nabla\cdot\bol{a}_{\perp}}\geq\epsilon> 0$ in $\Omega$.
Then, there is no integral surface $\p U$ enclosing a finite volume $U\subset\Omega$ with $\hb{w}$ as normal vector.  
\end{proposition}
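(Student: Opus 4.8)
The plan is to argue by contradiction through a single application of the divergence theorem to the field $\bol{a}_{\perp}$. Suppose such an integral surface $\p U$ existed, bounding a region $U\subset\Omega$ of finite positive volume and carrying $\hb{w}$ as its unit normal at every point. The strategy exploits the fact that the two hypotheses on $\bol{a}_{\perp}$ act on the flux of $\bol{a}_{\perp}$ through $\p U$ in incompatible ways: the pointwise orthogonality $\bol{a}_{\perp}\cdot\bol{w}=0$ forces this flux to vanish, whereas the uniform bound $\abs{\di\bol{a}_{\perp}}\geq\epsilon$ forces the associated volume integral to be strictly nonzero.

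First I would write, by Gauss's theorem,
\[
\int_{U}{\di\bol{a}_{\perp}}\,dV=\oint_{\p U}{\bol{a}_{\perp}\cdot\bol{n}}\,dS,
\]
which is legitimate since $\bol{a}_{\perp}\in C^{1}$ and $\p U$ is a closed (piecewise $C^1$) surface. By assumption the unit normal on $\p U$ is $\bol{n}=\pm\hb{w}$, so the surface integrand equals $\bol{a}_{\perp}\cdot\lr{\pm\hb{w}}=\pm w^{-1}\lr{\bol{a}_{\perp}\cdot\bol{w}}=0$. Hence the right-hand side vanishes and $\int_{U}{\di\bol{a}_{\perp}}\,dV=0$.

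Next I would show the left-hand side cannot vanish. Since $\bol{a}_{\perp}\in C^{1}\lr{\Omega}$, the scalar $\di\bol{a}_{\perp}$ is continuous on the connected set $\Omega$ and, by hypothesis, never zero there. By the intermediate value theorem it therefore keeps a fixed sign throughout $\Omega$, and in particular on $U$. Consequently
\[
\abs{\int_{U}{\di\bol{a}_{\perp}}\,dV}=\int_{U}{\abs{\di\bol{a}_{\perp}}}\,dV\geq\epsilon\,\abs{U}>0,
\]
because $U$ has positive volume. This contradicts the previous step, and the contradiction establishes that no such surface exists.

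I expect the delicate point to be the constant-sign step: what must be excluded is precisely the cancellation that would occur if $\di\bol{a}_{\perp}$ changed sign over $U$, and it is the uniform lower bound $\abs{\di\bol{a}_{\perp}}\geq\epsilon$ together with connectedness of $\Omega$ that rules this out. A secondary, purely technical matter is the regularity required for Gauss's theorem — one needs the integral surface $\p U$ to be regular enough (e.g.\ Lipschitz) to support the divergence theorem, which is implicit in speaking of a surface that bounds a volume. Note that the tangency condition $\bol{n}\cdot\bol{w}=0$ on $\p\Omega$ is not needed for this interior argument; the obstruction is entirely local and geometric, expressing that the plane field orthogonal to $\hb{w}$ admits no closed integral surface.
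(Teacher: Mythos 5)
Your proposal is correct and follows essentially the same route as the paper: argue by contradiction, apply the divergence theorem to $\bol{a}_{\perp}$ over $U$, note the flux through $\p U$ vanishes since $\bol{a}_{\perp}\cdot\hb{w}=0$, and derive a contradiction with $\abs{\nabla\cdot\bol{a}_{\perp}}\geq\epsilon>0$. You in fact make explicit a step the paper leaves implicit — that continuity and nonvanishing of $\nabla\cdot\bol{a}_{\perp}$ on the connected domain force a constant sign, so no cancellation can make the volume integral vanish.
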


\begin{proof}
Suppose that such a surface exists. Observe that $\p U$ is smooth because its normal $\hb{w}$ is, by hypothesis, smooth. Then the divergence theorem holds,
\begin{equation}
0=\int_{\p U}{\bol{a}_{\perp}\cdot\hb{w}}\,dS=\int_{U}{\nabla\cdot\bol{a}_{\perp}}\,dV,
\end{equation}
which contradicts the assumption $\abs{\nabla\cdot\bol{a}_{\perp}}\geq\epsilon> 0$. 
\end{proof}

\section{Examples}

\begin{example}
Consider the vector field $\bol{w}=\hb{w}=\nabla x$ in a smoothly bounded connected domain $\Omega\subset\mathbb{R}^{3}$ such that $x\in\left[0,x_{\ast}\right]$, $y\in\left[0,y_{\ast}\right]$, and $z\in\left[0,z_{\ast}\right]$. Suppose that the domain is periodic in the $x$ direction with period $x_{\ast}$. Evidently $h=\hat{h}=0$.
However, setting $\bol{a}_{\perp}=z\nabla x\cp\nabla y$, one has $\abs{\bol{a}_{\perp}}=z$ (implying $\abs{\bol{a}_{\perp}}\leq z_{\ast}$) and $\nabla\cdot\bol{a}_{\perp}=1$. Hence, from corollary \ref{cor1} and taking $u=0$ at $y=0$, $y=y_{\ast}$, $z=0$, and $z=z_{\ast}$ (this is possible since on these planes $\bol{w}\cdot\bol{n}=0$ and the trace operator can be defined),
\begin{equation}
\Norm{\nabla_{\perp}u}\geq~z^{-1}_{\ast}\Norm{u}.
\end{equation}
Similarly, for vector fields in the form $\bol{w}=\alpha\nabla x+\beta\nabla y$, the Poincar\'e-like inequality can be obtained by setting $\bol{a}_{\perp}=z\nabla x\cp\nabla y$ (provided that $\bol{w}\cdot\bol{n}=0$ on $\p\Omega$). This example shows that the non-vanishing of the helicity $h$ is not a necessary condition for the Poincar\'e-like inequality to hold.
\end{example}

\begin{example}(Poincar\'e-like inequality for finite helicity epi-2D vector fields)\label{ex_2}\\
Let $\bol{w}\in C^{\infty}\lr{\ov{\Omega}}$ be a smooth vector field in a smoothly bounded connected domain $\Omega\subset\mathbb{R}^{3}$ such that $\bol{n}\cdot\bol{w}=0$ on $\p\Omega$. 
Assume $\abs{h}=\abs{\JI{w}}\geq \epsilon>0$ in $\Omega$ and that $\bol{w}$ admits the
Clebsch parametrization\footnote{See Ref.\cite{YClebsch} on the Clebsch parametrization of vector fields in $\mathbb{R}^3$. A vector field admitting the decomposition of equation \eqref{epi2d} is called and epi-2D vector field (see Ref.\cite{YEpi2D}).}
\begin{equation}
\bol{w}=\nabla\phi+\psi\,\nabla\theta,\label{epi2d}
\end{equation}
where $\phi$, $\psi$, and $\theta$ are smooth functions in $\ov{\Omega}$.
Then:
\begin{equation}\label{lemma3}
\Norm{\nabla_{\perp} u}\geq \frac{\epsilon}{2\nu}\Norm{u}\,\,\,\,\,\,\,\,\,\forall u\in {\mc{H}}^{\perp}_{0}\lr{\Omega},
\end{equation}  
with $\nu=\sup_{\ov{\Omega}}w\abs{\nabla\phi}$.
\end{example}

\begin{proof}
Consider the vector field
\begin{equation}
\bol{a}_{\perp}=\psi\,\nabla\theta\cp\nabla\phi~~~~\rm{in}~~~~\Omega.
\end{equation}
$\bol{a}_{\perp}$ has the following properties in $\Omega$:
\begin{subequations}
\begin{align}
\bol{a}_{\perp}\cdot\bol{w}&=0,\\
\abs{\nabla\cdot\bol{a}_{\perp}}=\abs{\nabla\psi\cdot\nabla\theta\cp\nabla\phi}=\abs{\JI{w}}=\abs{h}&\geq\epsilon> 0,\\
\abs{\bol{a}_{\perp}}&\leq w\abs{\nabla\phi}\leq\nu,
\end{align}
\end{subequations}
where in the last equation we used the fact that $\bol{w}$ and $\phi$ are smooth in $\ov{\Omega}$ and therefore
$w\abs{\nabla\phi}$ is bounded in $\Omega$ by some positive real constant $\nu$. 
The proof is completed by applying corollary \ref{cor1}. 
\end{proof}

\begin{example}(Poincar\'e-like inequality for finite helicity and finite divergence vector fields)\label{ex_3}\\
Let $\bol{w}\in C^{\infty}\lr{\ov{\Omega}}$ be a smooth vector field in a smoothly bounded connected domain $\Omega\subset\mathbb{R}^{3}$ such that $\bol{n}\cdot\bol{w}=0$ on $\p\Omega$. 
Assume $\abs{h}=\abs{\JI{w}}\geq \epsilon>0$ and $\abs{\nabla\cdot\hb{w}}\geq \delta>0$ in $\Omega$. Then:
\begin{equation}\label{lemma4}
\Norm{\nabla_{\perp} u}\geq \frac{\delta\epsilon}{2M^2\nu}\Norm{u}\,\,\,\,\,\,\,\,\,\forall u\in {\mc{H}}^{\perp}_{0}\lr{\Omega},
\end{equation}  
with $M=\sup_{\ov{\Omega}}$w and $\nu=\sup_{\ov{\Omega}}\lvert\hb{w}\cp\lr{\nabla\cp\hb{w}}-\nabla\log{\hat{h}}\rvert$.
\end{example}

\begin{proof}
Since $\bol{w}$ is smooth over the closed bounded interval $\ov{\Omega}$, in such interval $w=\abs{\bol{w}}\leq M$ and $\abs{\nabla\cp\bol{w}}\leq N$ for some positive real constants $M$ and $N$. If $\theta$ is the angle between $\bol{w}$ and $\nabla\cp\bol{w}$, one has $\abs{h}=w\abs{\nabla\cp\bol{w}}\abs{\cos{\theta}}\geq\epsilon$. Therefore $w\abs{\nabla\cp\bol{w}}\geq \epsilon$ and, from the boundedness of $\nabla\cp\bol{w}$, it follows that $w\geq\epsilon/N>0$. Similarly, $\abs{\nabla\cp\bol{w}}\geq \epsilon/M>0$. 

Next observe that, since $w\neq 0$, the normalized vector field $\hb{w}=\bol{w}/w$ is well defined in $\ov{\Omega}$. In addition, the helicity $\hat{h}=\JI{\hb{w}}$ of the normalized vector field $\hb{w}$ satisfies $\lvert\hat{h}\rvert=w^{-2}\abs{h}\geq M^{-2}~\epsilon>0$.

Consider the vector field
\begin{equation}
\bol{a}_{\perp}=\frac{1}{\hat{h}}\lr{\hb{b}-\nabla\log{\hat{h}}}\cp\hb{w},
\end{equation}
where $\hb{b}=\hb{w}\cp\lr{\nabla\cp\hb{w}}$ is the \textit{field force} of $\hb{w}$. Next, observe that $\nabla\cp\hb{w}=\hb{b}\cp\hb{w}+\hat{h}\hb{w}$ and therefore
\begin{dmath}
\nabla\cdot\bol{a}_{\perp}
=\nabla\cdot\left[\frac{\hb{b}\cp\hb{w}}{\hat{h}}+\nabla\lr{\frac{1}{\hat{h}}}\cp\hb{w}\right]=
\nabla\cdot\left[\frac{\nabla\cp\hb{w}-\hat{h}\hb{w}}{\hat{h}}+\nabla\lr{\frac{1}{\hat{h}}}\cp\hb{w}\right]=
-\nabla\cdot\hb{w}.
\end{dmath}
Hence, $\bol{a}_{\perp}$ has the following properties in $\Omega$:
\begin{subequations}
\begin{align}
\bol{a}_{\perp}\cdot\bol{w}&=0,\\
\abs{\nabla\cdot\bol{a}_{\perp}}=\abs{\nabla\cdot\hb{w}}&\geq\delta> 0,\\
\abs{\bol{a}_{\perp}}&\leq\abs{\frac{\hb{b}-\nabla\log{\hat{h}}}{\hat{h}}}\leq M^2\epsilon^{-1}\abs{\hb{b}-\nabla\log{\hat{h}}}\leq M^2\epsilon^{-1}\nu,
\end{align}
\end{subequations}
where in the last equation we used the fact that $\bol{w}$ is smooth in $\ov{\Omega}$ and therefore
$\abs{\hb{b}-\nabla\log{\hat{h}}}$ is bounded in $\Omega$ by some positive real constant $\nu$. The proof is completed by applying corollary \ref{cor1}. 
\end{proof}

\begin{example}(Poincar\'e-like inequality for divergence free Beltrami fields)\label{lemma_}\\
Let $\bol{w}\in C^{\infty}\lr{\ov{\Omega}}$ be a smooth non-vanishing vector field in a smoothly bounded connected domain $\Omega\subset\mathbb{R}^{3}$ such that $\bol{n}\cdot\bol{w}=0$ on $\p\Omega$. 
Assume that $\hb{w}$ is a Beltrami field, i.e. $\hb{b}=\hb{w}\cp\lr{\nabla\cp\hb{w}}=\bol{0}$ in $\Omega$, and $\nabla\cdot\hb{w}=0$ in $\Omega$.
Then:
\begin{equation}
\Norm{\nabla_{\perp} u}\geq \frac{1}{\sup_{\ov{\Omega}}\abs{\bol{x}}}\Norm{u}\,\,\,\,\,\,\,\,\,\forall u\in {\mc{H}}^{\perp}_{0}\lr{\Omega}.
\end{equation}  
\end{example}

\begin{proof}
Consider the vector field
\begin{equation}
\bol{a}_{\perp}=\hb{w}\cp\lr{\frac{\bol{x}}{2}\cp\hb{w}}~~~~\rm{in}~~~~\Omega.
\end{equation}
We have
\begin{equation}
\nabla\cdot\bol{a}_{\perp}=\frac{1}{2}\lr{\nabla\cdot\bol{x}-\hb{w}\cdot\nabla\lr{\bol{x}\cdot\hb{w}}}=\lr{1+\bol{x}\cdot\hb{b}}=1
\end{equation}
Hence, $\bol{a}_{\perp}$ has the following properties in $\Omega$:
\begin{subequations}
\begin{align}
\bol{a}_{\perp}\cdot\bol{w}&=0,\\
\abs{\nabla\cdot\bol{a}_{\perp}}=1&> 0,\\
\abs{\bol{a}_{\perp}}&\leq\frac{1}{2}\abs{\bol{x}}\leq\nu,
\end{align}
\end{subequations}
where in the last equation we used the fact that $\bol{x}$ is smooth in $\ov{\Omega}$ and therefore
it is bounded in $\Omega$ by some positive real constant $\nu$. 
The proof is completed by applying corollary \ref{cor1} with $2\nu=\sup_{\ov{\Omega}}\abs{\bol{x}}$. 
\end{proof}

\section{Existence and uniqueness of solution}

When the conditions of Theorem \ref{PI} are satisfied, $\Norm{u}_{\mc{H}^{\perp}}^{2}=\Norm{u}^{2}+\Norm{\nabla_{\perp}u}^{2}\leq C\Norm{\nabla_{\perp}u}^{2}$ for some positive real constant $C$. 
Then, a new Hilbert space $\tilde{\mc{H}}_{0}$ can be defined with norm $\Norm{\nabla_{\perp}u}$. 
Evidently $\mc{H}_{0}^{\perp}\lr{\Omega}=\tilde{\mc{H}}^{\perp}_{0}\lr{\Omega}$.
With this result, a unique weak solution to the orthogonal Poisson equation can be obtained by application of Riesz's representation theorem.

\begin{theorem} (Existence and uniqueness of solution to the orthogonal Poisson equation) \label{WSol}\\
Let $\bol{w}\in C^{\infty}(\tilde{\Omega})$ be a smooth vector field in a bounded domain $\tilde{\Omega}\subset\mathbb{R}^3$ satisfying $\bol{n}\cdot\bol{w}=0$ on $\p\Omega$. 
Assume $\abs{h}=\abs{\JI{w}}\geq \delta>0$ in $\tilde{\Omega}$.
Then, for any $\phi\in L^2\lr{\Omega}$ the orthogonal Poisson equation \eqref{NLapEq}
admits a unique weak solution $u\in {\mc{H}}^{\perp}_{0}\lr{\Omega}$:
\begin{equation}
\lr{u,v}_{\perp}=-\int_{\Omega}{v\,\phi}\,dV\,\,\,\,\,\,\,\,\forall v\in{\mc{H}}_{0}^{\perp}\lr{\Omega}.
\label{ExSol}
\end{equation} 
\end{theorem}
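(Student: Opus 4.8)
The plan is to apply the Riesz representation theorem on the Hilbert space $\mc{H}^{\perp}_{0}\lr{\Omega}$. The key enabling fact is the Poincar\'e-like inequality of Theorem \ref{PI}, which under the stated hypotheses ($\abs{h}\geq\delta>0$) gives $\Norm{\nabla_{\perp}u}\geq\frac{\epsilon}{2M^2\nu}\Norm{u}$. As noted in the text preceding this theorem, this estimate shows that $\Norm{\nabla_{\perp}u}$ and $\Norm{u}_{\mc{H}^{\perp}}$ are equivalent norms on $\mc{H}^{\perp}_{0}$, so that the orthogonal gradient product $\lr{u,v}_{\perp}$ (which by Proposition on $C^1_0$ is already a genuine inner product once $h\neq 0$) endows $\mc{H}^{\perp}_{0}\lr{\Omega}$ with a complete inner-product structure. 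I will work with $\lr{\cdot,\cdot}_{\perp}$ as the inner product throughout.

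First I would fix $\phi\in L^2\lr{\Omega}$ and define the linear functional $F:\mc{H}^{\perp}_{0}\lr{\Omega}\rightarrow\mathbb{R}$ by
\begin{equation}
F\lr{v}=-\int_{\Omega}{v\,\phi}\,dV.
\end{equation}
The next step is to verify that $F$ is bounded. By Cauchy--Schwarz in $L^2$ and then the Poincar\'e-like inequality,
\begin{equation}
\abs{F\lr{v}}\leq\Norm{\phi}_{L^2}\Norm{v}_{L^2}\leq\frac{2M^2\nu}{\epsilon}\Norm{\phi}_{L^2}\Norm{\nabla_{\perp}v},
\end{equation}
so $F$ is a bounded linear functional on $\lr{\mc{H}^{\perp}_{0},\Norm{\nabla_{\perp}\cdot}}$. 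Riesz's representation theorem then yields a unique $u\in\mc{H}^{\perp}_{0}\lr{\Omega}$ with $\lr{u,v}_{\perp}=F\lr{v}$ for all $v$, which is precisely the weak formulation \eqref{ExSol}; uniqueness is automatic from Riesz.

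The remaining point is to confirm that \eqref{ExSol} is the correct weak form of the boundary value problem \eqref{NLapEq}. I would check this by formal integration by parts against a test function $v\in C^1_0\lr{\Omega}$: from $\Delta_{\perp}u=\di{\lr{\nabla_{\perp}u}}$ one has $\int_{\Omega}v\,\Delta_{\perp}u\,dV=-\int_{\Omega}\nabla_{\perp}u\cdot\nabla v\,dV$, and since $\nabla_{\perp}u$ is orthogonal to $\bol{w}$ one may replace $\nabla v$ by $\nabla_{\perp}v$ in the integrand, giving $-\lr{u,v}_{\perp}=\int_{\Omega}v\,\phi\,dV$, i.e.\ \eqref{ExSol}. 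The homogeneous Dirichlet condition $u=0$ on $\p\Omega$ is encoded in the membership $u\in\mc{H}^{\perp}_{0}=\mathrm{ker}\lr{T}$ via the trace operator of the Trace Lemma, whose existence relies on the tangency $\bol{n}\cdot\bol{w}=0$ assumed here.

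I expect the main substantive obstacle to be already discharged by the earlier results rather than residing in the Riesz argument itself, which is routine once the norm equivalence is in hand. The genuinely delicate ingredient is the Poincar\'e-like inequality of Theorem \ref{PI}, resting on Lemmas \ref{LM1} and \ref{LM2} (the Darboux construction of the auxiliary field $\bol{a}_{\perp}$ with controlled divergence and boundedness); the finite-helicity hypothesis $\abs{h}\geq\delta>0$ enters exactly there, ensuring $\nabla_{\perp}$ is coercive on $\mc{H}^{\perp}_{0}$ despite the degeneracy of $\Delta_{\perp}$ in the $\hb{w}$-direction. Thus the only care needed in this final proof is to state cleanly that the Poincar\'e estimate makes $\lr{\cdot,\cdot}_{\perp}$ a complete inner product and that $F$ is bounded with respect to it, after which Riesz's theorem closes the argument.
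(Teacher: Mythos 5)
Your proposal is correct and follows essentially the same route as the paper: the Poincar\'e-like inequality of Theorem \ref{PI} makes $\lr{\cdot,\cdot}_{\perp}$ an equivalent complete inner product on $\mc{H}^{\perp}_{0}$, the functional $v\mapsto-\int_{\Omega}v\,\phi\,dV$ is bounded by Cauchy--Schwarz plus that inequality, and Riesz's representation theorem gives existence and uniqueness. Your closing integration-by-parts check of the weak formulation mirrors the remark the paper places immediately after its proof.
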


\begin{proof}
In virtue of Theorem \ref{PI}, the linear functional $\int_{\Omega}v\,\phi\,dV$ is bounded. Indeed,
\begin{equation}
\int_{\Omega}v\,\phi\,dV\leq \Norm{v}\Norm{\phi}\leq C\Norm{\nabla_{\perp}v}, 
\end{equation}
with $C$ the positive real constant given in Theorem \ref{PI}. Then, Riesz's theorem can be applied: 
since ${\mc{H}}^{\perp}_{0}\lr{\Omega}$ is a Hilbert space, we can find a unique $u\in {\mc{H}}^{\perp}_{0}\lr{\Omega}$ such that \eqref{ExSol} holds.
\end{proof} 
\noindent Notice that, if $u\in {\mc{H}}^{\perp}_{0}\cap C^{2}\lr{\Omega}$, $u$ is
a classical solution, as can be seen by integration by parts:
\begin{equation}
\left(v,u\right)_{\perp}=\int_{\Omega}{\left(\nabla_{\perp}v\cdot\nabla_{\perp}u\right)}\,dV=
\int_{\partial\Omega}v\nabla_{\perp}u\cdot\bol{n}\,dS
-\int_{\Omega}{v\Delta_{\perp}u}\,dV=
-\int_{\Omega}{v\Delta_{\perp}u}\,dV.
\end{equation}

\section{Concluding Remarks}

In this paper we have studied the boundary value problem associated with a second-order non-elliptic
partial differential operator, the orthogonal Poisson equation, arising in the context of topologically constrained diffusion.
We have shown that a weak unique solution exists whenever it is possible to find
a vector field $\bol{a}_{\perp}$, perpendicular to the constraining vector field $\hb{w}$, with a non-vanishing divergence.
A sufficient conditions for such a vector field to exist is the non-vanishing of the helicity of the constraining vector field.
The solution is an element of a newly introduced Hilbert space, $\mc{H}^{\perp}_{0}$, 
where the inner product is given by the $L^2$ product of the orthogonal gradients of pairs of functions.
Hence, the orthogonal Laplacian is close to the standard Laplacian in that the bilinear form $-\lr{\Delta_{\perp}u,v}$ defines an inner product yielding the Sobolev-like Hilbert space $\mc{H}^{\perp}_{0}$, 
and a Poincar\'e-like inequality is satisfied, $-\lr{\Delta_{\perp}u,u}\geq C\Norm{u}^{2}_{L^2}$.

The present results for the non-integrable constraints are in sharp contrast  
with the H-theorem discussed in Ref. \cite{Sato2} for integrable constraints; the integrability/non-integrability of the constraint defines two different classes of the degenerate diffusion equations in three dimensional space. The integrability is related to the helicity of the
constraining vector field. When the helicity vanishes, the constraint foliates the space, leading to non-unique solutions of the orthogonal Poisson equation (with suitable boundary conditions), 
where the multiplicity is expressed in terms of the Casimir invariants. 
On the other hand, non-vanishing helicity makes the orthogonal Laplacian almost like the usual Laplacian, resulting in unique solvability of the boundary value problem.

When we generalize the theory for higher ($>3$) dimension spaces, we need a careful consideration about the kernel of the generalized Poisson matrix; integrable and non-integrable components 
can coexist in the kernel. 
In particular, for an $n$-dimensional antisymmetric operator $\mc{J}$ of rank $2m=n-k$
the complete non-integrability of the $k$-dimensional kernel is guaranteed by the non-vanishing
of the generalized helicity $\bol{h}=\lr{h_{1},...,h_{k}}\neq\bol{0}$, where $h_{i}=\xi_{1}\w ...\w\xi_{k}\w d\xi_{i}$ and the
$\xi_{i}\in T^{\ast}\Omega$ ($i=1,\cdots,k$) are the $k$ 1-forms spanning the kernel of $\mc{J}$.   

\section{Acknowledgment}
We thank Professor Philip J. Morrison for useful discussions on Hamiltonian structures. The work of N.S. was supported by JSPS KAKENHI Grant No. 18J01729, and that of Z.Y. was supported by JSPS KAKENHI Grant No. 17H01177.


\appendix

\section{Topologically constrained diffusion in three spatial dimensions}

The applicability of Fick's laws of diffusion is restricted to systems that live
in a `homogeneous' space, mathematically characterized as the symplectic manifold of canonical phase space.
Topologically constrained systems fail, in general, to exhibit such symplectic structure (see Refs. \cite{Sato1,Sato2}). The loss of canonical phase space directly translates in the non-ellipticity of the
stationary form of the corresponding diffusion equation: each topological constraint represents a spatial direction that is not accessible to the dynamics, and thus the diffusion operator is not sensitive to derivations of the probability density along it. 

In its general form, the dynamics of a three dimensional conservative system is described by the equation:
\begin{equation}
\bol{v}=\bol{w}\cp\nabla H,
\end{equation}
where $\bol{v}=\dot{\bol{x}}$ is the velocity, $\bol{w}=\bol{w}\lr{\bol{x}}$ the constraining vector field, and $H=H\lr{\bol{x}}$ the Hamiltonian function. Both $\bol{w}$ and $H$ are assumed smooth in their domain.
This system is conservative because it preserves the energy $H$. 
$\bol{w}$ is a constraining vector field because dynamics always obeys the constraint:
\begin{equation}
\bol{w}\cdot\bol{v}=0.
\end{equation}
The constraint is inetgrable if the Frobenius itegrability condition (see Ref. \cite{Frankel}) for the vector field $\bol{w}$ is satisfied:
\begin{equation}
\bol{w}\cdot\nabla\cp\bol{w}=0.
\end{equation}
The quantity $h=\bol{w}\cdot\nabla\cp\bol{w}$, which does not vanish in general, is called the helicity of $\bol{w}$.

By neglecting deterministic terms in the Hamiltonian $H$, and setting $\nabla H=\bol{\Gamma}$, where $\bol{\Gamma}$ is three dimensional Gaussian white noise, one arrives at the stochastic differential equation:
\begin{equation}
\bol{v}=\bol{w}\cp\bol{\Gamma}.\label{SDE}
\end{equation} 
The diffusion equation for the probability density $u=u\lr{\bol{x}}$ corresponding to \eqref{SDE} is then (see Ref. \cite{Sato2}):
\begin{equation}
\frac{\p u}{\p t}=\frac{1}{2}\nabla\cdot\left[\bol{w}\cp\lr{\nabla\cp u\,\bol{w}}\right].\label{FPE3D}
\end{equation}
Assuming that $\abs{\bol{w}}=w\neq 0$, the stationary form of \eqref{FPE3D} reads:
\begin{equation}
\Delta_{\perp}u+\lr{\hb{b}+3\nabla_{\perp}\log w^2}\cdot\nabla_{\perp}u+\lr{\nabla_{\perp}\log w^2\cdot\hb{b}+\hat{\mf{B}}+\frac{1}{2\, w^2}\Delta_{\perp}w^2}u=0.\label{FPE3DStat}
\end{equation}
Here, we introduced the normalized vector field $\hb{w}=\bol{w}/w$, and the quantities $\hb{b}=\hb{w}\cp\lr{\nabla\cp\hb{w}}$ and $\hat{\mf{B}}=\nabla\cdot\hb{b}$, which are called field force and field charge  respectively (see also Ref.\cite{Sato2} for the definition of field force and field charge). Furthermore, $\nabla_{\perp} f=\hb{w}\cp\lr{\nabla f\cp\hb{w}}$ is the normal component of the gradient $\nabla f$ of the function $f$ with respect to the constraining vector field $\hb{w}$, and $\Delta_{\perp}f=\nabla\cdot\lr{\nabla_{\perp}f}$ is the \textit{orthogonal Laplacian} of the function $f$. 
The non-ellipticity of equation \eqref{FPE3DStat} is manifest in that it does not invole
derivatives of the probability density $u$ in the direction parallel to $\hb{w}$.

\end{normalsize}


\begin{thebibliography}{99}

\bibitem{Gilbarg} D. Gilbarg and N. S. Trudinger, \textit{Elliptic Partial Differential Equations of Second Order} (Springer, 2001), p. 31.

\bibitem{Evans} L. C. Evans, \textit{Partial Differential Equations} (American Mathematical Society, 2010, 2nd ed.), p. 314. 

\bibitem{Brezis} H. Brezis, \textit{Functional Analysis, Sobolev Spaces and Partial Differential Equations} (Springer-Verlag, New York, 2011), p. 294.

\bibitem{Kohn} J. J. Kohn and L. Niremberg, \textit{Degenerate Elliptic-Parabolic Equations of Second Order}, Commun. Pure Appl. Math. \textbf{XX}, pp. 797-872 (1967).

\bibitem{Murthy} M. R. V. Murthy and G. Stampacchia, \textit{Boundary Value Problems for Some Degenerate-Elliptic Operators}, Annali di Matematica \textbf{80}, 1, pp. 1-122 (1968).

\bibitem{Chab} J. H. Chabrowski, \textit{On the Dirichlet Problem for Degenerate Elliptic Equations},
Publ. RIMS, Kyoto Univ. \textbf{23}, pp. 1-16 (1987).

\bibitem{Trudinger} N. S. Trudinger, \textit{Linear Elliptic Operators with Measurable Coefficients}, 
Ann Sc Norm Super Pisa, Classe di Scienze, S\'erie 3, \textbf{27}, 2, pp. 265-308 (1973).  



\bibitem{Chanillo2} S. Chanillo and R. L. Wheeden, \textit{Existence and Estimates of Green's Function for Degenerate Elliptic Equations}, Ann Sc Norm Super Pisa, Classe di Scienze, S\'erie 4, \textbf{15}, 2, pp. 309-340 (1988).  

\bibitem{Sato1} N. Sato and Z. Yoshida, \textit{Up-hill diffusion, creation of density gradients: Entropy measure for systems with topological constraints}, Phys. Rev. E \textbf{93}, 6, 062140 (2016).

\bibitem{Sato2} N. Sato and Z. Yoshida, \textit{Diffusion with Finite-Helicity Field Tensor: A Mechanism of Generating Heterogeneity}, Phys. Rev. E \textbf{97}, 022145 (2018).

\bibitem{Morrison} P. J. Morrison, \textit{Hamiltonian Description of the Ideal Fluid}, Rev. Mod. Phys., \textbf{70}, 467 (1998).

\bibitem{DeLeon_3} M. de Le\'on, \textit{Methods of Differential Geometry in Analytical Mechanics}, (Elsevier, New York, 1989), pp. 250-253.

\bibitem{Arnold_4} V. I. Arnold, \textit{Mathematical Methods of Classical Mechanics}, 2nd ed. (Springer, New York, 1989), pp. 230-232. 

\bibitem{Bloch} A. M. Bloch, J. E. Marsden, and D. V. Zenkov, \textit{Nonholonomic Dynamics}, Notices of the AMS, \textbf{52}, pp. 320-329 (2005).

\bibitem{Yoshida} Z. Yoshida, T. Tokieda, and P. J. Morrison, \textit{Rattleback: A model of how geometric singularity induces dynamic
chirality}, Phys. Lett. A, \textbf{381}, pp. 2772-2777 (2017).

\bibitem{Yoshida1} Z. Yoshida \textit{et al.}, \textit{Magnetospheric Vortex Formation: Self-Organized Confinement of Charged Particles}, Phys. Rev. Lett., \textbf{104}, 235004 (2010).

\bibitem{Frankel} T. Frankel, \textit{The Geometry of Physics, An Introduction} (Cambridge University Press, Cambridge, 2012, 3rd ed.), pp. 165-178.	

\bibitem{Evans2} L. C. Evans, \textit{Partial Differential Equations} (American Mathematical Society, 2010, 2nd ed.), p. 274. 

\bibitem{Pigola} S. Pigola, A. G. Setti, \textit{Global Divergence Theorems in Nonlinear PDEs and Geometry}, ENSAIOS MATEM\'ATICOS \textbf{26}, pp. 1-77 (2014).

\bibitem{YClebsch} Z. Yoshida, \textit{Clebsch parameterization: Basic properties and remarks on its applications}, J. Math. Phys. \textbf{50}, 112101 (2009). 

\bibitem{YEpi2D} Z. Yoshida and P. J. Morrison, \textit{Epi-two-dimensional fluid flow: A new topological paradigm for dimensionality}, Phys. Rev. Lett. \textbf{119} 244501 (2017).

\bibitem{Shane} E. J. McShane, \textit{Extension of Range of Functions}, Bulletin of the American Mathematical Society \textbf{40}, 12, pp. 837–843 (1934).

\bibitem{Whitney} H. Whitney, \textit{Analytic extension of differentiable functions in closed sets},
 Trans. Amer. Math. Soc. \textbf{36}, pp. 63-89 (1934). 


\end{thebibliography}
\end{document}